\documentclass[11pt,a4paper]{article}
\usepackage{amssymb,amsmath,amsthm,bm}
\usepackage{graphicx}
\usepackage{geometry}
\usepackage{float}
\usepackage{tikz}
\graphicspath{{./figures/}}
\usepackage[maxbibnames=99]{biblatex}
\addbibresource{mybib.bib}
\newtheorem{teo}            {Theorem}
\newtheorem{lema}     [teo]{Lemma}
\newtheorem{defin}[teo]{Definition}
\newtheorem{corollary} [teo]{Corollary}

\newtheorem{example} [teo]          {Example}
\newtheorem{obs} [teo]           {Remark}
\newtheorem{prop} [teo]       {Proposition}

\DeclareMathOperator{\ric}{Ric}
\DeclareMathOperator{\Ric}{Ric}

\title{Ricci curvature, Bruhat graphs and Coxeter groups}
\author{Viola Siconolfi\ \footnotemark[1]}


\begin{document}
\maketitle
\footnotetext[1]{
Dipartimento di Matematica, Universita di Roma 'Tor Vergata', Via della Ricerca Scientifica, 00133 Rome, Italy.
\textit{Email address: }\texttt{siconolf@mat.uniroma2.it}.} 

\footnotetext[2]{
An extended abstract containing some of the results of this paper will appear in the proceedings of FPSAC 2020 \cite{sico}}

\begin{abstract}
We consider the notion of discrete Ricci curvature for graphs defined by Schmuckenschl{\"a}ger \cite{shmuck} and compute its value for Bruhat graphs associated to finite Coxeter groups. To do so we work with the geometric realization of a finite Coxeter group and a classical result obtained by Dyer in \cite{Dyer}. As an application we obtain a bound for the spectral gap of the Bruhat graph of any finite Coxeter group and an isoperimetric inequality for them. Our proofs are case-free.
\end{abstract}

Key Words: Coxeter Theory, Ricci Curvature, Cayley graphs,

\section{Introduction}

In \cite{klart} the authors studied the discrete Ricci curvature of various Cayley graphs associated to different groups. 
Among these graphs they worked on the Ricci curvature of the Bruhat graph of type $A$. 
The definition of discrete curvature used was the one  by Schmuckenschl{\"a}ger in \cite{shmuck}. This was defined as an extension of a notion of curvature defined for operators on probability spaces in \cite{bakem}.

The study of Ricci curvature for graphs is part of a trend in graph theory of translating notions of Riemannian geometry to graphs, we have for example the Laplacian (\cite{bre3}) and the Harnack inequality (\cite{bre4},\cite{bre9} and \cite{bre16}) for graphs.
In their paper \cite{klart}  Klartag et al. proved some interesting applications of the Ricci curvature: knowing the Ricci curvature of a graph it is possible to have information about its spectral gap (see definition  \ref{spec}) and obtain an isoperimetric inequality, the latter in analogy with Riemannian geometry.

Part of the
difficulty with the study of the Ricci curvature from Schmuckenshl{\"a}ger is the lack of examples where explicit computations
are possible. 
In this paper we compute the Ricci curvature of the Bruhat graph of  finite Coxeter groups ( Theorem \ref{B(W)}). 
The proof relies on geometric aspects of Coxeter groups, in particular on a result of Dyer \cite{Dyer}. As corollaries of this main result we obtain a bound for the spectral gap of Bruhat graphs and an isoperimetric inequality for them.

The paper is divided as follows.
In Section \ref{prelimin} we recall the necessary preliminaries of both Coxeter theory and discrete Ricci curvature. 
In particular we review the structure of the Bruhat graph and the geometric representation of a finite Coxeter system. The definition of Ricci curvature is then presented and further some results from \cite{bakem} are recalled. 

 Section \ref{secbru} is the one of our main result. We start by defining an equivalence relation on the vertices of a Bruhat graph whose distance from the identity is $2$. This allows us to compute the Ricci curvature of the Bruhat graph of any finite Coxeter group from the corresponding analysis on dihedral groups.
 
 Some applications are presented in the last Section.

\section{Preliminaries}\label{prelimin}
\subsection{ Coxeter groups}\label{ssec1}

This subsection is devoted to recalling classical results of Coxeter group theory that are needed to prove our main result. The main references are \cite{BB} and \cite{hump}.

We define Coxeter systems as follows:

\begin{defin}
Let $W$ be a group generated by the elements in $S$	 and $S=\{s_i\}_{i\in I}$ be a finite set with the following relations:
\[
(s_is_j)^{m_{ij}}=e;
\] 
with $m_{ii}=1$ for all $i\in I$ and $m_{ij}=m_{ji}\geq 2$ (including $m_{ij}=\infty$) for all $i,j\in I, i\neq j$. Then
$W$ is called a Coxeter group, $S$ turns out to be a  minimal set of generators, its elements are called Coxeter generators.
\end{defin}

The values $m_{ij}$ are usually seen as the coefficients of a symmetric matrix .  

All the information about a Coxeter group can be encoded in a graph called Coxeter graph. Its set of vertices is $S$ and there is an edge between two vertices $s_i$ and $s_j$ if $m_{ij}\geq 3$, such an edge is labelled with $m_{ij}$ if $m_{ij}\geq 4$.

\begin{defin}
We call length of an element $w$ in $W$ the minimal length $\ell(w)$ of all the possible expressions of $w$ as product of element in $S$
\end{defin}

 We define in $W$ the set of reflections as the union of all the conjugates of $S$:
 $$T:=\cup_{w\in W}wSw^{-1}.$$ 
 The definitions of length and reflections allow us to define a partial order on the set $W$:

\begin{defin}
Given $w\in W$ and $t\in T$, if $w'=tw$ and
$\ell(w')<\ell(w)$ we write $w\leftarrow w'$.
 Given two elements $v,w\in W$ we say that 
 $v \geq w$ according to the Bruhat order if there are 
 $w_0,\ldots, w_k\in W$ such that
\[
v=w_0\leftarrow w_1\ldots w_{k-1}\leftarrow w_k=w.
\]
\end{defin}
 The Bruhat graph associated to a Coxeter group, denoted $B(W)$, is the graph where the set of vertices is the set of elements in $W$ and there is an edge between two vertices $w,v$ if and only if either $w\leftarrow v$ or $v\leftarrow w$, we consider all these edges to be undirected.

Coxeter groups were introduced as a generalization of groups generated by reflections. The results that follow are linked to this and have a more geometrical character.

Given a finite Coxeter system $(W,S)$, it is known (see e.g. \cite[Subsection 5.3]{hump}) that it can be represented as a group generated by orthogonal reflections.
More precisely, we take $V$ as a real vector space of dimension $|S|$ with basis $\Pi$. We fix a bijection between $\Pi$ and $S$ that allows us to define the index $m_{\alpha,\beta}$ for pairs $\alpha,\beta \in \Pi$ . Furthermore we define $\langle,\rangle$ a positive definite, symmetric, bilinear form defined on $V$ that acts on the basis as follows:
\[
\langle\alpha,\beta \rangle=-\cos\left(\frac{\pi}{m(\alpha,\beta)}\right);\quad \alpha,\beta\in \Pi.
\]
Notice that $\langle\alpha,\alpha\rangle=1$ for every $\alpha \in \Pi$.
For any $\alpha$ in $V\setminus\{0\}$ we consider the reflection $r_{\alpha}$ in $V$ that acts as follows:
\[
r_{\alpha}(x)=x-\frac{2\langle x,\alpha\rangle}{\langle\alpha,\alpha\rangle}\alpha.
\]
Let 
$\tilde{W}$ be the group generated by $\{r_{\alpha}|\alpha\in \Pi\}$, then $W\equiv \tilde{W}$.
We let $\Phi=W\Pi$ be the set of roots associated to this representation of $W$ and $\Phi^+=\{\alpha\in\Phi|\alpha=\sum_{\gamma\in\Pi}c_{\alpha\gamma}\gamma, c_{\alpha\gamma}\geq 0\}$ be the subset of positive roots.
The reflections of $W$ are in bijection with $\Phi^{+}$ by the correspondence $\alpha\rightarrow r_{\alpha}$.

We conclude by listing some facts that are used in Section \ref{secbru} .

\begin{defin}
A Coxeter group $(W,S)$ is called dihedral if $|S|=2$ and $m_{1,2}\geq 3$ where $S=\{s_1,s_2\}$.
\end{defin}

.
\begin{lema}\label{dyer}
Suppose $t_1,t_2,t_3,t_4$ are reflections in $W$ and $t_1t_2=t_3t_4\neq e$. Then the subgroup $W'=<t_1,t_2,t_3,t_4>\subset W$ is a dihedral Coxeter group.
\end{lema}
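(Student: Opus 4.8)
My plan is to argue geometrically inside the representation recalled above, where $\langle\,,\,\rangle$ is positive definite and every reflection is $r_\gamma$ for some root $\gamma$. Write $t_i=r_{\gamma_i}$ with $\gamma_i\in\Phi^+$. Since $r_{\gamma}=r_{\gamma'}$ holds precisely when $\gamma=\pm\gamma'$, and since $t_1t_2\neq e$ forces $t_1\neq t_2$ (respectively $t_3\neq t_4$ from $t_3t_4\neq e$), the pairs $\gamma_1,\gamma_2$ and $\gamma_3,\gamma_4$ are each linearly independent. Hence $P_{12}:=\mathrm{span}(\gamma_1,\gamma_2)$ and $P_{34}:=\mathrm{span}(\gamma_3,\gamma_4)$ are honest $2$-planes in $V$.

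The key step is to read off an intrinsic rotation plane from $g:=t_1t_2=t_3t_4$. Each $r_{\gamma_i}$ fixes the hyperplane $\gamma_i^\perp$ pointwise, so on $P_{12}^\perp=\gamma_1^\perp\cap\gamma_2^\perp$ both $t_1$ and $t_2$ act trivially, and thus $g$ fixes $P_{12}^\perp$ pointwise; on the plane $P_{12}$, on the other hand, $g$ is the product of two distinct reflections of a Euclidean plane, that is, a rotation by a nonzero angle, so it has no nonzero fixed vector there. Therefore $\Fix(g)=P_{12}^\perp$, and $P_{12}=\Fix(g)^\perp$ is determined by $g$ alone. Repeating the computation for the factorisation $g=t_3t_4$ yields $P_{34}=\Fix(g)^\perp$ as well, so $P_{12}=P_{34}=:P$. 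In particular all four roots $\gamma_1,\gamma_2,\gamma_3,\gamma_4$ lie in the single $2$-plane $P$.

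The dihedral conclusion is now immediate. Every $t_i$ fixes $P^\perp$ pointwise and preserves $P$, so (for $W$ finite) $W'=\langle t_1,t_2,t_3,t_4\rangle$ embeds into $O(P)$ as a finite group generated by reflections of the Euclidean plane $P$, and any such group is dihedral. Equivalently, the roots of $\Phi$ lying in $P$ form a rank-$2$ root subsystem whose reflections generate a dihedral group containing $W'$, and because $W'$ contains the two distinct reflections $t_1,t_2$ it cannot sit inside the rotation subgroup, hence is itself dihedral. Being a dihedral group, $W'$ is in particular a dihedral Coxeter group.

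I expect the genuine obstacle to be the middle paragraph: showing that the rotation plane is an invariant of the element $g$, independent of its factorisation into two reflections, is exactly what couples the two sides of $t_1t_2=t_3t_4$ and pins all four roots into a common plane. The only additional point to check is the borderline case in which $g$ has order $2$ (orthogonal roots $\langle\gamma_1,\gamma_2\rangle=0$): here one inspects the rank-$2$ subsystem $\Phi\cap P$, and under the convention for dihedral groups that also admits $m_{1,2}=2$ (as in Dyer's formulation) the statement holds uniformly, the sole truly degenerate instance being $\{t_3,t_4\}=\{t_1,t_2\}$.
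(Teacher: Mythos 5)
Your proof is correct, but note that the paper does not actually prove Lemma~\ref{dyer}: the statement is imported from Dyer \cite{Dyer} (see also \cite{dyer2}), where it is established for \emph{arbitrary} Coxeter systems by algebraic means, via the theory of reflection subgroups, with no finiteness or positive-definiteness available. Your argument is instead a self-contained geometric proof whose scope is exactly the finite case: both the orthogonal splitting $V=P_{12}\oplus P_{12}^{\perp}$ and the identification $P_{12}=\Fix(g)^{\perp}$ rely on $\langle\,,\,\rangle$ being positive definite, which holds precisely because $W$ is finite, so the argument does not extend to infinite Coxeter groups as Dyer's does. Since the paper invokes the lemma only for finite groups (finiteness is assumed throughout Section~\ref{secbru}), this restriction costs nothing in context; moreover, your ``intrinsic rotation plane'' step directly delivers what the paper actually extracts from the lemma --- in the proof of Proposition~\ref{prop1} the lemma is invoked precisely to place the four roots $\alpha_1,\ldots,\alpha_4$ in a common plane, which is the statement you prove en route. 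Your closing caveat is also a genuine point rather than pedantry: under the paper's own definition of dihedral (which demands $m_{1,2}\geq 3$) the lemma is false as stated --- take $t_1,t_2$ commuting reflections and $\{t_3,t_4\}=\{t_1,t_2\}$, so that $W'\cong I_2(2)$ --- hence one must, as you and Dyer do, admit $m_{1,2}=2$; the paper itself tacitly adopts this convention when it calls $G_u\cong I_2(2)$ dihedral in its $B_4$ example.
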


\begin{corollary}\label{corody}
Given $\alpha_1$ and $\alpha_2$ two positive roots in $\Phi$, we define the following subgroup:
\[
W':=<r_{\alpha}|\alpha\in(\mathbb{R}\alpha_1\oplus\mathbb{R}\alpha_2)\cap\Phi^+>.
\]
Then $W'$ is dihedral and is the largest dihedral subgroup of $W$ containing $r_{\alpha_1}$ and $r_{\alpha_2}$.
\end{corollary}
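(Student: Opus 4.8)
The plan is to exploit the geometric representation and to reduce everything to the plane $P := \mathbb{R}\alpha_1 \oplus \mathbb{R}\alpha_2$. First I would record the easy structural facts. Since every root in $\Phi = W\Pi$ has $\langle \alpha,\alpha\rangle = 1$ (the form is $W$-invariant and equals $1$ on $\Pi$), two distinct positive roots cannot be proportional; hence, assuming $\alpha_1 \neq \alpha_2$, the space $P$ is genuinely $2$-dimensional (the case $\alpha_1 = \alpha_2$ being the trivial one, $W' \cong \mathbb{Z}/2\mathbb{Z}$). The key observation is that every generator $r_\alpha$ of $W'$, with $\alpha \in (\mathbb{R}\alpha_1 \oplus \mathbb{R}\alpha_2)\cap\Phi^+ \subseteq P$, preserves $P$ and fixes $P^{\perp}$ pointwise: indeed $r_\alpha(x) = x - 2\langle x,\alpha\rangle\alpha$ maps $P$ into $P$ because $\alpha \in P$, and fixes any $x$ with $\langle x,\alpha\rangle = 0$, in particular every $x \in P^{\perp}$. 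Because $\langle,\rangle$ is positive definite we have the orthogonal splitting $V = P \oplus P^{\perp}$, so the restriction map $W' \to O(P)$ is injective.

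Next I would prove that $W'$ is dihedral. By the previous paragraph $W'$ is isomorphic to its image in $O(P) \cong O(2)$, and that image is generated by the plane-reflections $r_\alpha|_P$. Since $W$ is finite, $W'$ is a finite subgroup of $O(2)$ generated by reflections, and every such group is a dihedral group $I_2(n)$ whose reflections correspond to $n$ equally spaced lines through the origin of $P$. (If one prefers to avoid quoting this classification, the same conclusion can be reached through Lemma \ref{dyer}: for $\alpha,\beta \in P$ the product $r_\alpha r_\beta$ is a rotation of $P$, finiteness of $W$ forces the rotation angles to be commensurable, and matching two decompositions of a common rotation into reflection pairs lets Lemma \ref{dyer} keep all the generators inside a single dihedral subgroup.) In particular each $r_\alpha$ with $\alpha \in P\cap\Phi^+$ is one of the reflections of this $I_2(n)$, so $W'$ is dihedral and visibly contains $r_{\alpha_1}$ and $r_{\alpha_2}$.

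Finally I would establish maximality. Let $D$ be any dihedral subgroup of $W$ with $r_{\alpha_1}, r_{\alpha_2} \in D$, and write $D = \langle r_\beta, r_\gamma\rangle$ for two reflections. Every reflection of $D$ has its root in the plane $Q := \mathbb{R}\beta \oplus \mathbb{R}\gamma$, since $D$ preserves $Q$ and acts on it as a dihedral group. In particular $\alpha_1,\alpha_2 \in Q$, and as they are linearly independent this forces $Q = P$. Hence every reflection of $D$ is of the form $r_\alpha$ with $\alpha \in P\cap\Phi^+$ and therefore lies in $W'$; as $D$ is generated by its reflections, $D \subseteq W'$. Thus $W'$ contains every dihedral subgroup through $r_{\alpha_1}$ and $r_{\alpha_2}$, i.e. it is the largest one.

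The step I expect to be most delicate is the middle one: certifying that the subgroup generated by \emph{all} the plane reflections is genuinely dihedral, rather than merely some finite reflection group of unclear type. The cleanest route is the classification of finite reflection subgroups of $O(2)$; the alternative is to feed the pairwise relations $r_\alpha r_\beta = (\text{rotation of } P)$ into Lemma \ref{dyer} and argue by finiteness that all these rotations lie in one cyclic group. I would also take care at the very start to dispose of the degenerate case $\alpha_1 = \alpha_2$, which the unit-length remark rules out as soon as $\alpha_1 \neq \alpha_2$.
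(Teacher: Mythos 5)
Your argument is correct in substance, but note first that the paper contains no proof of this corollary at all: it simply refers the reader to Dyer's paper (Section 3), where the result is established for arbitrary, possibly infinite, Coxeter systems by algebraic--combinatorial means (root subsystems of reflection subgroups and their canonical generators), alongside Lemma \ref{dyer}. Your route is genuinely different: you exploit finiteness through the positive definite form, namely the orthogonal splitting $V=P\oplus P^{\perp}$, the faithful restriction $W'\hookrightarrow O(P)\cong O(2)$, and the classification of finite reflection subgroups of $O(2)$. What this buys is a short, self-contained, purely Euclidean proof that covers exactly the setting the paper works in (every Coxeter group in the paper is finite); what it gives up is Dyer's generality, since for infinite $W$ there is no positive definite form, $P^{\perp}$ need not complement $P$, and ``dihedral'' must include the infinite dihedral group, so your argument does not extend.

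Two points in your write-up need to be made precise. First, in the maximality step you ``write $D=\langle r_{\beta},r_{\gamma}\rangle$ for two reflections''. This is legitimate only if ``dihedral subgroup'' is read as \emph{dihedral reflection subgroup of rank two}, i.e.\ a subgroup generated by two reflections of $W$ --- which is indeed the reading of Proposition \ref{prop1} and of Dyer, and is the only reading under which the statement is true. For an abstractly dihedral subgroup the claim fails: take $W=A_2\times A_1$; then $W$ itself is abstractly dihedral of order $12$ (it is isomorphic to $\mathbb{Z}/2\mathbb{Z}\times S_3$, hence to $I_2(6)$), it contains the two simple reflections $r_{\alpha_1},r_{\alpha_2}$ of the $A_2$ factor, yet it is not contained in $W'=A_2$. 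So you should state this interpretation explicitly rather than assume it silently; it is not something your argument can recover. Second, your justification that every reflection of $D$ has its root in $Q=\mathbb{R}\beta\oplus\mathbb{R}\gamma$ (``since $D$ preserves $Q$ and acts on it as a dihedral group'') is too quick as stated. The clean argument, which you need at least for $r_{\alpha_1},r_{\alpha_2}$, is: every element of $D$ fixes $Q^{\perp}$ pointwise because the two generators do; the fixed space of a reflection $r_{\alpha}$ is exactly the hyperplane $\alpha^{\perp}$; hence $r_{\alpha}\in D$ forces $Q^{\perp}\subseteq\alpha^{\perp}$, i.e.\ $\alpha\in(Q^{\perp})^{\perp}=Q$ by positive definiteness. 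With these two repairs (and granting, as the paper implicitly does in its own Example, that $I_2(2)$ counts as dihedral, which your $O(2)$ classification may produce when the only roots in $P$ are two orthogonal ones), your proof is complete.
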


A proof of Corollary \ref{corody} and Lemma \ref{dyer} can be found in \cite[Section 3]{Dyer}, see also \cite{dyer2}.

\subsection{Ricci curvature of a locally finite graph}\label{radici}
We recall the discrete Ricci curvature of a locally finite graph, the main reference for this section is \cite[Subsection 1.1]{klart}, the notation is slightly different.  

Let $G$ be a graph, 
we assume it to be undirected, with no loops and with no multiple edge (i.e. a simple graph); also we ask that it has no isolated vertices.
We denote by $\mathcal{V}(G)$ the set of vertices of $G$, by $\mathcal{E}(G)$ its edges and by $\delta(x,y)$ the function $\delta:\mathcal{V}(G)\times\mathcal{V}(G)\rightarrow \mathbb{N}\cup \{\infty\}$ that gives the distance between two vertices. For a fixed $x\in\mathcal{V}(G)$  and $i\in\mathbb{N}$ we define the set:
\[
B(i,x):= \{u\in\mathcal{V}(G)|\delta(x,u)=i\}; 
\]
by $d(x)$ we denote the cardinality of $B(1,x)$ and call it the degree of $x$. 

\begin{defin}
We say that a given graph $G$ is locally finite if $d(x)< \infty$ for all $x\in \mathcal{V}(G)$.
\end{defin}
From now on we assume $G$ to be locally finite.

Given real functions $f$ and $g$ on $\mathcal{V}(G)$ and $x\in\mathcal{V}(G)$ we define the following operators:
\begin{itemize}
  \item $\Delta (f)(x):=\sum_{v\in B(1,x)}(f(v)-f(x))$;
  \item $\Gamma (f,g)(x):=\frac{1}{2}\sum_{v\in B(1,x)}(f(x)-f(v))(g(x)-g(v))$;
  \item $\Gamma_2(f)(x):=\frac{1}{2}\left(\Delta\Gamma(f,f)(x)\right)-\Gamma(f,\Delta(f))(x)$.
\end{itemize}
Instead of $\Gamma(f,f)(x)$ we write $\Gamma(f)(x)$. Note that $\Gamma(f)(x)\geq 0$ $\forall x\in \mathcal{V}(G)$, the equality holds if and only if $f(x)=f(v)$ for all $v$ in $B(1,x)$. Also note that $\Delta(f)(x)=\sum_{v\in B(1,x)}f(v)-d(x)f(x)$.

\medskip
These definitions allow us, following \cite{klart} and \cite{shmuck}, to introduce the Ricci curvature of a graph:
\begin{defin}\label{curvy}
The discrete Ricci curvature of a graph $G$, denoted $\ric(G)$, is the maximum value $K\in\mathbb{R}\cup \{-\infty\}$ such that for any real function $f$ on $\mathcal{V}(G)$ and any vertex $x$, $\Gamma_2(f)(x)\geq K\Gamma(f)(x)$ .
\end{defin}
There is also a local version of this definition: 
\begin{defin}
The local Ricci curvature of a graph $G$ at a given point $\bar{x}\in\mathcal{V}(G)$ is the maximum value $K\in\mathbb{R}\cup \{-\infty\}$ such that for any real function $f$ on $\mathcal{V}(G)$ , $\Gamma_2(f)(\bar{x})\geq K\Gamma(f)(\bar{x})$ holds. The local curvature so defined is denoted $\Ric(G)_{\bar{x}}$. 
\end{defin}

We obtain that $\Ric(G)=inf_{x\in\mathcal{V}(G)}\Ric(G)_x$.

For brevity, in the rest of this work, we often say "curvature" instead of "Discrete Ricci curvature".

\begin{obs}
Note that if $c,d\in\mathbb{R}$
and $f:\mathcal{V}(G)\rightarrow \mathbb{R}$
then $\Delta(f)=\Delta(f+c)$, $\Gamma(f+c,g+d)=\Gamma(f,g)$ and $\Gamma_{2}(f+c)=\Gamma_2(f)$. 
We may therefore assume that in the expression $\Gamma_2(f)(x)\geq K\Gamma(f)(x)$, $f$ satisfies $f(x)=0$.
\end{obs}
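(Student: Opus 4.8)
The plan is to verify the three identities by direct expansion, exploiting the single structural feature shared by all three operators: each is built solely from differences of function values across edges, so any additive constant must cancel termwise. First I would treat $\Delta$. Writing $\Delta(f+c)(x)=\sum_{v\in B(1,x)}\bigl((f+c)(v)-(f+c)(x)\bigr)$, the constant $c$ enters every summand once with a plus and once with a minus sign, so it cancels in each term and we recover $\Delta(f)(x)$. The argument for $\Gamma$ is the same: expanding $\Gamma(f+c,g+d)(x)=\tfrac12\sum_{v\in B(1,x)}\bigl((f+c)(x)-(f+c)(v)\bigr)\bigl((g+d)(x)-(g+d)(v)\bigr)$, both $c$ and $d$ drop out of each factor before the product is formed, giving $\Gamma(f,g)(x)$. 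In particular, taking $g=f$ yields $\Gamma(f+c)=\Gamma(f)$, and taking $d=0$ yields the one-sided invariance $\Gamma(f+c,g)=\Gamma(f,g)$, which I will need in the last step.

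For $\Gamma_2$ I would avoid any fresh computation and instead substitute the two facts just established into the definition $\Gamma_2(f+c)(x)=\tfrac12\Delta\Gamma(f+c,f+c)(x)-\Gamma\bigl(f+c,\Delta(f+c)\bigr)(x)$. The invariance of $\Gamma$ turns $\Gamma(f+c,f+c)$ into $\Gamma(f,f)$, so the first term equals $\tfrac12\Delta\Gamma(f,f)(x)$. In the second term the invariance of $\Delta$ gives $\Delta(f+c)=\Delta(f)$, reducing it to $\Gamma(f+c,\Delta(f))(x)$, and the one-sided $\Gamma$-invariance then collapses this to $\Gamma(f,\Delta(f))(x)$. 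Combining the two pieces yields $\Gamma_2(f+c)=\Gamma_2(f)$.

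Finally, for the normalization statement I would fix an arbitrary $f$ and vertex $x$ and apply the identities with the specific choice $c=-f(x)$. The translate $\tilde f:=f-f(x)$ then satisfies $\tilde f(x)=0$, while $\Gamma(\tilde f)(x)=\Gamma(f)(x)$ and $\Gamma_2(\tilde f)(x)=\Gamma_2(f)(x)$; hence the inequality $\Gamma_2(f)(x)\ge K\Gamma(f)(x)$ of Definition \ref{curvy} holds for $f$ precisely when it holds for $\tilde f$, so nothing is lost by restricting to functions with $f(x)=0$. There is no genuine obstacle in this argument; the only point demanding a little care is bookkeeping the additive constant through the nested operators defining $\Gamma_2$, in particular recognizing that $\Delta(f+c)=\Delta(f)$ must be invoked inside the argument of $\Gamma$ before the one-sided $\Gamma$-invariance can be applied.
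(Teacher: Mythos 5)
Your proposal is correct: the termwise cancellation of constants in $\Delta$ and $\Gamma$, the compositional deduction for $\Gamma_2$ (using $\Delta(f+c)=\Delta(f)$ inside $\Gamma$ followed by the one-sided $\Gamma$-invariance), and the normalization via $c=-f(x)$ are exactly the intended argument. The paper states this remark without proof, treating it as immediate from the definitions, and your verification is precisely the computation it leaves implicit.
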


The following result appears in \cite[Section 1.1]{klart}:

\begin{prop}\label{proof2}
$\Gamma$  and $\Gamma_2$ can be expressed through the following formulas when $f(x)=0$:
$$\Gamma (f)(x)=\frac{1}{2}\sum_{v\in B(1,x)}f(v)^2, $$
$$2\Gamma_2(f)(x)=\frac{1}{2}\sum_{u\in B(2,x)}\sum_{v\in B(1,u)\cap B(1,x)}(f(u)-2f(v))^2+\left( \sum_{v\in B(1,x)}f(v) \right)^2+
$$
$$
+\sum_{\{v,v'\}\in \mathcal{E}(G)}\left(2(f(v)-f(v'))^2+\frac{1}{2}(f(v)^2+f(v')^2)\right)+\sum_{v\in B(1,x)}\frac{4-d(x)-d(v)}{2} f(v)^2,
$$
where the third sum runs over all $v,v'\in B(1,x)$ such that $\{v,v'\}$ is an edge in $G$.
\end{prop}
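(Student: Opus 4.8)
The plan is to compute both $\Gamma(f)(x)$ and $2\Gamma_2(f)(x)$ directly from the definitions, expanding every sum over the balls $B(1,x)$ and $B(2,x)$ and using throughout the normalization $f(x)=0$. The first formula is immediate: since $f(x)=0$, $\Gamma(f)(x)=\frac12\sum_{v\in B(1,x)}(f(x)-f(v))^2=\frac12\sum_{v\in B(1,x)}f(v)^2$. The same normalization gives the identity $\Delta(f)(x)=\sum_{v\in B(1,x)}f(v)$, which I record for later use. The central structural observation, used repeatedly, is that every neighbour $u$ of a vertex $w\in B(1,x)$ falls into exactly one of three classes: $u=x$; $u\in B(1,x)$, in which case $\{w,u\}$ is an edge inside the first ball; or $u\in B(2,x)$, in which case $w\in B(1,u)\cap B(1,x)$.

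First I would expand $\Delta\Gamma(f)(x)=\sum_{w\in B(1,x)}\big(\Gamma(f)(w)-\Gamma(f)(x)\big)$. Writing $\Gamma(f)(w)=\frac12\sum_{u\sim w}(f(w)-f(u))^2$ and splitting the double sum $\sum_{w\in B(1,x)}\sum_{u\sim w}(f(w)-f(u))^2$ along the three classes, the $u=x$ class produces $\sum_{w}f(w)^2$, the edge class produces $2\sum_{\{v,v'\}}(f(v)-f(v'))^2$ (each edge seen from both endpoints), and the $B(2,x)$ class produces $\sum_{u\in B(2,x)}\sum_{v\in B(1,u)\cap B(1,x)}(f(v)-f(u))^2$. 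Hence $\Delta\Gamma(f)(x)$ equals $\tfrac12$ times this decomposed double sum minus the correction $\tfrac{d(x)}{2}\sum_{v}f(v)^2$ coming from the $\Gamma(f)(x)$ terms.

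Next I would treat $\Gamma(f,\Delta f)(x)=-\tfrac12\sum_{w\in B(1,x)}f(w)\big(\Delta f(x)-\Delta f(w)\big)$. The $\Delta f(x)$ part factors out and, via the identity above, contributes $-\tfrac12\big(\sum_{v}f(v)\big)^2$, which is already the squared-sum term in the target. In the $\Delta f(w)=\sum_{u\sim w}f(u)-d(w)f(w)$ part I would again split along the three classes of $u$, yielding an edge cross term $\sum_{\{v,v'\}}f(v)f(v')$, a $B(2,x)$ cross term $\sum_{u}\sum_{v}f(v)f(u)$, and a diagonal term $\sum_{w}d(w)f(w)^2$. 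Assembling $2\Gamma_2(f)(x)=\Delta\Gamma(f)(x)-2\,\Gamma(f,\Delta f)(x)$ then lets me complete squares: the $B(2,x)$ contributions recombine as $\tfrac12(f(v)-f(u))^2-f(v)f(u)=\tfrac12(f(u)-2f(v))^2-\tfrac32 f(v)^2$, and the edge contributions as $(f(v)-f(v'))^2-2f(v)f(v')=2(f(v)-f(v'))^2+\tfrac12(f(v)^2+f(v')^2)-\tfrac32(f(v)^2+f(v')^2)$, exhibiting the first and third sums of the statement.

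The main obstacle, where all the genuine care lies, is the bookkeeping of the leftover diagonal terms $f(w)^2$. These come from four sources: the $u=x$ term, the $-\tfrac{d(x)}{2}$ correction, the $+d(w)$ term from $\Delta f(w)$, and the two $-\tfrac32$ remainders thrown off by completing the squares above. To collapse them I would use the degree decomposition $d(w)=1+q(w)+p(w)$, where $q(w)=|\{u\in B(1,x):u\sim w\}|$ and $p(w)=|\{u\in B(2,x):u\sim w\}|$, together with $\sum_{\{v,v'\}}(f(v)^2+f(v')^2)=\sum_{w}q(w)f(w)^2$ and $\sum_{u}\sum_{v}f(v)^2=\sum_{w}p(w)f(w)^2$. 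Substituting $p(w)=d(w)-1-q(w)$, the $q(w)$ contributions cancel and the coefficient of $f(w)^2$ reduces to $2-\tfrac{d(x)}{2}-\tfrac{d(w)}{2}=\tfrac{4-d(x)-d(w)}{2}$, which is exactly the fourth sum. Verifying that these numerical coefficients cancel precisely is the crux; the remainder is routine expansion.
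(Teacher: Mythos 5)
Your proof is correct. I checked the bookkeeping: the three-way classification of neighbours of $w\in B(1,x)$ (as $x$, in $B(1,x)$, or in $B(2,x)$), the two completion-of-squares identities $\tfrac12(f(v)-f(u))^2-f(v)f(u)=\tfrac12(f(u)-2f(v))^2-\tfrac32 f(v)^2$ and $(f(v)-f(v'))^2-2f(v)f(v')=2(f(v)-f(v'))^2+\tfrac12(f(v)^2+f(v')^2)-\tfrac32(f(v)^2+f(v')^2)$, and the final collapse of the diagonal terms via $d(w)=1+q(w)+p(w)$ giving the coefficient $\tfrac12-\tfrac{d(x)}{2}+d(w)-\tfrac32(d(w)-1)=\tfrac{4-d(x)-d(w)}{2}$ all check out. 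Note that the paper itself gives no proof of this proposition (it is quoted from Section 1.1 of the Klartag--Kozma--Ralli--Tetali paper), so your argument is a self-contained verification by direct expansion, which is essentially the standard computation behind the cited result.
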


The following simple observation does not appear anywhere in the literature so we include its proof:

\begin{lema}\label{proof3}
We can write
  \begin{equation}\label{equcurv}
  \text{Ric}(G)=\text{inf}_{x,f} \frac{\Gamma_2(f)(x)}{\Gamma(f)(x)} 
  \end{equation}
  where $x$ ranges in $\mathcal{V}(G)$ and $f$ ranges over the real functions defined on $\mathcal{V}(G)$ such that $f(x)=0$ and $\Gamma(f)(x)>0$.
\end{lema}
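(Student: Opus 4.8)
The plan is to prove the two inequalities $\text{Ric}(G)\le I$ and $I\le\text{Ric}(G)$, where I abbreviate $I:=\inf_{x,f}\frac{\Gamma_2(f)(x)}{\Gamma(f)(x)}$ with the infimum taken over the pairs $(x,f)$ described in the statement, i.e. those with $f(x)=0$ and $\Gamma(f)(x)>0$. Throughout I will rely on the Remark preceding Proposition \ref{proof2}: since $\Gamma$ and $\Gamma_2$ are unchanged when a constant is added to $f$, every inequality that must hold for an arbitrary $f$ may be verified under the normalization $f(x)=0$, so the constraint $f(x)=0$ in the infimum costs nothing.

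For $\text{Ric}(G)\le I$, I would argue directly from Definition \ref{curvy}. By definition the value $K:=\text{Ric}(G)$ satisfies $\Gamma_2(f)(x)\ge K\,\Gamma(f)(x)$ for every $f$ and every $x$. Restricting to an admissible pair $(x,f)$, for which $\Gamma(f)(x)>0$, and dividing by $\Gamma(f)(x)$ gives $\frac{\Gamma_2(f)(x)}{\Gamma(f)(x)}\ge K$; taking the infimum over all admissible pairs yields $I\ge K=\text{Ric}(G)$.

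For the reverse inequality it suffices to show that the value $K=I$ is itself admissible in the sense of Definition \ref{curvy}, i.e. that $\Gamma_2(f)(x)\ge I\,\Gamma(f)(x)$ holds for all $f$ and $x$; since $\text{Ric}(G)$ is by definition the largest such value, this gives $I\le\text{Ric}(G)$. By the normalization above I may assume $f(x)=0$. If $\Gamma(f)(x)>0$ the pair $(x,f)$ is admissible and $\frac{\Gamma_2(f)(x)}{\Gamma(f)(x)}\ge I$ is immediate from the definition of $I$ as an infimum. The remaining case $\Gamma(f)(x)=0$ is the only genuine obstacle: there the claim reduces to $\Gamma_2(f)(x)\ge 0$. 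I would dispatch it by recalling that $\Gamma(f)(x)=0$ holds exactly when $f(v)=f(x)$ for all $v\in B(1,x)$, so together with $f(x)=0$ it forces $f(v)=0$ for every $v\in B(1,x)$; feeding this into the formula for $2\Gamma_2(f)(x)$ in Proposition \ref{proof2} makes every term carrying a factor $f(v)$ with $v\in B(1,x)$ vanish, and the whole expression collapses to $\tfrac12\sum_{u\in B(2,x)}\sum_{v\in B(1,u)\cap B(1,x)}f(u)^2\ge 0$. This establishes the admissibility of $I$, and combining the two inequalities proves \eqref{equcurv}.
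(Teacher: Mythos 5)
Your proof is correct and follows essentially the same route as the paper: both reduce to the normalization $f(x)=0$, observe that pairs with $\Gamma(f)(x)=0$ impose no constraint because $\Gamma_2(f)(x)\geq 0$ there, and match the definition of $\Ric(G)$ as a maximal admissible $K$ against the infimum of the ratios. The only cosmetic difference is that you verify $\Gamma_2(f)(x)\geq 0$ in the degenerate case via the explicit formula of Proposition \ref{proof2}, while the paper invokes the definitions of $\Gamma$ and $\Gamma_2$ directly; both are valid.
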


\begin{proof}
We know that $G$ has no isolated vertices, then for any $\bar{x}\in \mathcal{V}(G)$ there is a function $\bar{f}$ on $\mathcal{V}(G)$ such that $\Gamma(\bar{f})(\bar{x})> 0$. Indeed it is sufficient that $\bar{f}$ does not vanish on $B(1,\bar{x})$ which is a non-empty set. Any $K$ that satisfies 
$$\Gamma_2(f)(x)\geq K\Gamma(f)(x )\quad \forall x,\forall f$$  also satisfies
$$
K\leq \frac{\Gamma_2(g)(x)}{\Gamma(g)(x)}<\infty
\quad \forall x,g \text{ s.t.} \Gamma(g)(x)>0.
$$
Let now $\tilde{x}\in \mathcal{V}(G)$ and $\tilde{f}:\mathcal{V}(G)\rightarrow \mathbb{R}$ be a pair such that $\Gamma(\tilde{f})(\tilde{x})=0$, it easily comes from the definitions that in this case $\Gamma_2(\tilde{f})(\tilde{x})\geq 0$ and so in particular $\Gamma_2(\tilde{f})(\tilde{x})\geq K\Gamma(\tilde{f})(\tilde{x})$.
We conclude that the Ricci curvature verifies
 $\Ric(G)\leq\frac{\Gamma_2(\bar{f})(\bar{x})}{\Gamma(\bar{f})(\bar{x})}< \infty$, and from this the statemant follows. 
\end{proof}

\begin{obs}
From Lemma \ref{proof3} we deduce the following formula for the local Ricci curvature:
\[
  \text{Ric}(G)_x:=\text{inf}_{f} \frac{\Gamma_2(f)(x)}{\Gamma(f)(x)}.
\]
Where $f$ ranges among the functions on $\mathcal{V}(G)$ such that $f(x)=0$ and $\Gamma(f)(x)\neq 0$.
\end{obs}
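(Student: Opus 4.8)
The plan is to run, essentially verbatim, the argument that proves Lemma~\ref{proof3}, but with the vertex held fixed at $\bar{x}$ instead of quantified over $\mathcal{V}(G)$; the phrase ``we deduce from Lemma~\ref{proof3}'' is best read as ``by the same proof.'' By the definition of local curvature, $\Ric(G)_{\bar{x}}$ is the largest $K \in \mathbb{R}\cup\{-\infty\}$ for which $\Gamma_2(f)(\bar{x}) \geq K\,\Gamma(f)(\bar{x})$ holds for every real function $f$ on $\mathcal{V}(G)$. Using the translation invariance recorded in the preceding Remark, namely $\Gamma(f+c)=\Gamma(f)$ and $\Gamma_2(f+c)=\Gamma_2(f)$, I would first normalize to functions satisfying $f(\bar{x})=0$ without changing the value of $K$.

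Next I would split these functions according to the value of $\Gamma(f)(\bar{x})$, which is always nonnegative. When $\Gamma(f)(\bar{x})=0$ the inequality to be verified reduces to $\Gamma_2(f)(\bar{x})\geq 0$, and this is automatic (as noted in the proof of Lemma~\ref{proof3}); such functions therefore place no upper restriction on $K$. When $\Gamma(f)(\bar{x})>0$ the inequality $\Gamma_2(f)(\bar{x})\geq K\,\Gamma(f)(\bar{x})$ is equivalent to $K\leq \Gamma_2(f)(\bar{x})/\Gamma(f)(\bar{x})$. Combining the two cases shows that the largest admissible $K$ is exactly $\inf_f \Gamma_2(f)(\bar{x})/\Gamma(f)(\bar{x})$, where $f$ ranges over functions with $f(\bar{x})=0$ and $\Gamma(f)(\bar{x})>0$; since $\Gamma$ is nonnegative, the condition $\Gamma(f)(\bar{x})\neq 0$ appearing in the statement is the same as $\Gamma(f)(\bar{x})>0$.

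Finally I would check that this infimum is a genuine real number rather than an infimum over the empty set: because $G$ has no isolated vertices, $B(1,\bar{x})$ is nonempty, so any $f$ that is nonzero somewhere on $B(1,\bar{x})$ and vanishes at $\bar{x}$ has $\Gamma(f)(\bar{x})>0$, and the infimum is therefore taken over a nonempty family and is bounded above by its value at such a test function. I do not expect a serious obstacle here; the only delicate point is the clean isolation of the degenerate case $\Gamma(f)(\bar{x})=0$, where the quotient would otherwise be undefined, and this is dispatched exactly as in the global statement.
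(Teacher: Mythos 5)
Your proposal is correct and matches the intended argument: the paper offers no separate proof of this remark precisely because it follows by rerunning the proof of Lemma~\ref{proof3} with the vertex held fixed, which is exactly what you do (normalization by translation invariance, the split on $\Gamma(f)(x)=0$ versus $\Gamma(f)(x)>0$, and non-emptiness of the test family via the absence of isolated vertices). Nothing is missing.
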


\begin{obs}\label{obs2}
From the formulas of $\Gamma$ and $\Gamma_2$ we obtain that $\Ric(G)_x$ depends only on the subgraph obtained as the union of the paths of length 1 and 2 starting from $x$. We will refer to such a subgraph as the length-2 path subgraph of $x$. As a consequence two vertices with isomorphic length-2 path subgraphs have the same local Ricci curvature.
\end{obs}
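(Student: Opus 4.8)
The plan is to read off, directly from the closed-form expressions in Proposition \ref{proof2}, exactly which combinatorial and metric data of $G$ enter into $\Gamma(f)(x)$ and $\Gamma_2(f)(x)$, and then to check that every such datum is already recorded in the length-2 path subgraph of $x$.

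First I would use the normalization $f(x)=0$, so that Proposition \ref{proof2} gives $\Gamma(f)(x)=\tfrac12\sum_{v\in B(1,x)}f(v)^2$; thus the denominator depends only on the values of $f$ on $B(1,x)$. For the numerator $2\Gamma_2(f)(x)$ I would go term by term: the first sum involves $f(u)$ for $u\in B(2,x)$ together with the sets $B(1,u)\cap B(1,x)$ (which neighbors of $x$ are adjacent to $u$); the second term uses only $f$ on $B(1,x)$; the third sum runs over edges $\{v,v'\}$ with $v,v'\in B(1,x)$; and the fourth sum uses the values $f(v)$ for $v\in B(1,x)$ together with the degrees $d(x)$ and $d(v)$. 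Hence the whole expression is a function of the following data: the values of $f$ on $B(1,x)\cup B(2,x)$; the degree $d(x)=|B(1,x)|$ and the degrees $d(v)$ for $v\in B(1,x)$; the edges internal to $B(1,x)$; and the adjacency between $B(1,x)$ and $B(2,x)$.

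Next I would verify that all of this data is encoded in the length-2 path subgraph $H$ of $x$, whose vertex set is $\{x\}\cup B(1,x)\cup B(2,x)$ and whose edges are those of $G$ lying on a path of length one or two issuing from $x$. The partition into $\{x\}$, $B(1,x)$, $B(2,x)$ is recovered from $H$ as the distance from $x$. Every edge of $G$ inside $B(1,x)$ lies on a path $x,v,v'$ and every edge between $B(1,x)$ and $B(2,x)$ lies on a path $x,v,u$, so both the internal edges of $B(1,x)$ and the $B(1,x)$--$B(2,x)$ adjacency are edges of $H$. The one point that needs a short argument is the degrees: for $v\in B(1,x)$ every neighbor of $v$ in $G$ lies at distance at most two from $x$ by the triangle inequality for $\delta$, hence in $\{x\}\cup B(1,x)\cup B(2,x)$, and the corresponding edge lies on a path of length at most two from $x$ and so belongs to $H$; therefore $d(v)$ equals the degree of $v$ in $H$. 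The same reasoning gives $d(x)=|B(1,x)|=\deg_H(x)$. Note that the degrees of the vertices $u\in B(2,x)$ never appear in the formulas, so it does not matter that $\deg_H(u)$ may differ from $d(u)$. I expect this degree identification, namely confirming that no neighbor of a distance-one vertex escapes the subgraph, to be the only place requiring care.

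Finally I would assemble the conclusion. Since every function on $\{x\}\cup B(1,x)\cup B(2,x)$ vanishing at $x$ extends by zero to a function on $\mathcal V(G)$, and since both $\Gamma_2(f)(x)$ and $\Gamma(f)(x)$ depend only on the restriction $f|_{B(1,x)\cup B(2,x)}$ and on the data of $H$ identified above, the infimum $\Ric(G)_x=\inf_f \Gamma_2(f)(x)/\Gamma(f)(x)$ is unchanged if $f$ ranges only over real functions on $H$ with $f(x)=0$ and $\Gamma(f)(x)>0$. Thus $\Ric(G)_x$ is determined by $H$ alone. For the last assertion, if $\phi$ is a graph isomorphism from the length-2 path subgraph of $x$ onto that of $y$ with $\phi(x)=y$, then $f\mapsto f\circ\phi$ is a bijection between the admissible test functions at $y$ and those at $x$ that preserves the value of $\Gamma_2$ and of $\Gamma$ at the base point; taking infima yields $\Ric(G)_x=\Ric(G)_y$.
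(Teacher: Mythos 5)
Your proposal is correct and follows the same route the paper intends: the remark is justified there purely by inspection of the closed-form expressions in Proposition \ref{proof2}, which is exactly what you carry out, term by term. Your additional care about recovering the degrees $d(v)$, $v\in B(1,x)$, inside the subgraph (and noting that degrees of vertices in $B(2,x)$ never appear), and your insistence that the isomorphism fix the base point, are precisely the details the paper leaves implicit.
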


We conclude with the following remark about triangle-free graphs. These are graphs with no $x,u,v\in\mathcal{V}(G)$ such that $\{x,u\},\{x,v\},\{u,v\}\in \mathcal{E}(G)$:

\begin{obs}\label{triang}
If $G$ is a triangle free graph, then:
\[
2\Gamma_2(f)(x)=\frac{1}{2}\sum_{u\in B(2,x)}\sum_{v\in B(1,u)\cap B(1,x)}(f(u)-2f(v))^2+
\]
\[
+(\sum_{v\in B(1,x)}f(v))^2+\sum_{v\in B(1,x)}\frac{4-d(x)-d(v)}{2} f(v)^2.
\]
We obtain this formula from Proposition \ref{proof2}.
\end{obs}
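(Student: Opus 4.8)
The plan is to read the identity off directly from Proposition \ref{proof2} by showing that, under the triangle-free hypothesis, the third of its four sums is empty. First I would scrutinize the index set of that sum: it ranges over pairs $\{v,v'\}\in\mathcal{E}(G)$ subject to the constraint that \emph{both} $v,v'\in B(1,x)$. For any pair meeting these two conditions, the three vertices $x,v,v'$ are pairwise adjacent — the edges $\{x,v\}$ and $\{x,v'\}$ are present because $v,v'\in B(1,x)$, while $\{v,v'\}$ is an edge by the summation condition — so $x,v,v'$ form a triangle in $G$.

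Next I would invoke the definition of triangle-free: there is no triple $x,u,v\in\mathcal{V}(G)$ with $\{x,u\},\{x,v\},\{u,v\}\in\mathcal{E}(G)$. Applying this to the triangle $x,v,v'$ identified above, I conclude that no pair $\{v,v'\}$ can simultaneously satisfy $v,v'\in B(1,x)$ and $\{v,v'\}\in\mathcal{E}(G)$. Hence the index set of the third sum in Proposition \ref{proof2} is empty, and the sum equals $0$.

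Finally I would substitute this vanishing back into the formula of Proposition \ref{proof2}, which leaves precisely the first, second, and fourth terms; collecting them yields the claimed expression for $2\Gamma_2(f)(x)$ in Remark \ref{triang}. The one place deserving care is the verification that the index set of the third sum coincides exactly with the set of triangles based at $x$, rather than merely being contained in it or containing it — but since the two defining conditions ($v,v'$ both in $B(1,x)$, and $\{v,v'\}$ an edge) translate term-for-term into the three edges of a triangle at $x$, the correspondence is exact and I anticipate no genuine obstacle.
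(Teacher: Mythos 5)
Your proposal is correct and matches the paper's (implicit) argument: the paper simply asserts the formula follows from Proposition \ref{proof2}, and the content of that assertion is exactly your observation that in a triangle-free graph no edge $\{v,v'\}$ can have both endpoints in $B(1,x)$, so the third sum in Proposition \ref{proof2} is empty. Your careful check that the index set of that sum corresponds precisely to triangles through $x$ is the right (and only) detail needed.
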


We end this subsection by recalling \cite[Theorem 1.2]{klart} and a corollary:

\begin{teo}\label{teotriang}
Let $G$ be a locally finite graph, $t(v,v')$ be the function that counts the number of triangles containing both the vertices $v$ and $v'$ and let $T=sup_{\{v,v'\}\subset \mathcal{V}(G)} t(v,v')$. Then $\Ric(G)\leq 2+\frac{T}{2}$.
\end{teo}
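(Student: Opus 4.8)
The plan is to bound $\Ric(G)$ from above by exhibiting a single base point $x$ and a single test function $f$ for which $\Gamma_2(f)(x)/\Gamma(f)(x)\le 2+T/2$; by the infimum formula of Lemma~\ref{proof3} this suffices. Guided by the fact that equality should hold on the complete graph $K_{T+2}$ (each of whose edges lies in $T$ triangles), I would take $f\equiv 1$ on $B(1,x)$, $f(x)=0$, and --- crucially --- extend $f$ to $B(2,x)$ by the constant value $2$. The reason for the value $2$ is that the only place where the values of $f$ on $B(2,x)$ enter the formula of Proposition~\ref{proof2} is the first double sum $\tfrac12\sum_{u\in B(2,x)}\sum_{v}(f(u)-2f(v))^2$; with this choice every summand equals $(2-2)^2=0$, so that whole term disappears. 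This does not affect $\Gamma(f)(x)=\tfrac12 d(x)$, which only sees $B(1,x)$.

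With the first term removed, I would just evaluate the remaining three terms of Proposition~\ref{proof2} on this $f$. The second term is $d(x)^2$; the third term reduces to the number $E_{11}$ of edges inside $B(1,x)$ (equivalently, of triangles through $x$), since each such edge contributes $2(1-1)^2+\tfrac12(1+1)=1$; and the fourth term is $\tfrac12\big(4d(x)-d(x)^2-\sum_{v\in B(1,x)}d(v)\big)$. Collecting everything and dividing by $\Gamma(f)(x)=\tfrac12 d(x)$, the target ratio becomes
\[
\frac{\Gamma_2(f)(x)}{\Gamma(f)(x)} \;=\; 2 \;+\; \frac{d(x)-\bar d(x)}{2} \;+\; \frac{E_{11}}{d(x)},
\qquad \bar d(x):=\frac{1}{d(x)}\sum_{v\in B(1,x)} d(v),
\]
where $\bar d(x)$ is the average degree of the neighbours of $x$. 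A sanity check on $K_{T+2}$ (where $\bar d(x)=d(x)$ and $E_{11}=\binom{T+1}{2}$) returns exactly $2+T/2$, confirming the bound is tight.

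It then remains to control the last two summands. For the triangle term, an edge $\{x,v\}$ with $v\in B(1,x)$ lies in exactly $|B(1,v)\cap B(1,x)|$ triangles, so this quantity is $\le T$ for every $v$; summing and using $E_{11}=\tfrac12\sum_{v\in B(1,x)}|B(1,v)\cap B(1,x)|$ gives $E_{11}\le \tfrac12 d(x)T$, hence $E_{11}/d(x)\le T/2$. For the degree term, I would choose $x$ to be a vertex of \emph{minimum} degree (such a vertex exists, the degrees forming a nonempty set of positive integers): then every neighbour of $x$ has degree at least $d(x)$, so $\bar d(x)\ge d(x)$ and the middle summand is $\le 0$. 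Combining the two estimates yields $\Gamma_2(f)(x)/\Gamma(f)(x)\le 2+T/2$, and Lemma~\ref{proof3} then gives $\Ric(G)\le 2+T/2$.

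The main obstacle, and the reason the argument needs exactly this combination of choices, is that no fixed test function works at an arbitrary base point: the constant function is extremal for cliques but gives a useless (large) bound at a high-degree, triangle-poor vertex such as the centre of a star, whereas a mean-zero function behaves well there but badly on cliques. What reconciles these is precisely the pairing of the constant function with a minimum-degree base point, which is exactly what forces the degree term $d(x)-\bar d(x)$ to be non-positive; recognising this pairing, together with the distance-$2$ extension that annihilates the first term of Proposition~\ref{proof2}, is the crux of the proof.
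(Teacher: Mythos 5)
Your proof is correct, but there is nothing in the paper to compare it against: the paper does not prove Theorem \ref{teotriang} at all, it simply recalls it as \cite[Theorem 1.2]{klart}. So your argument should be judged as a self-contained derivation from the tools the paper does supply (Proposition \ref{proof2} and Lemma \ref{proof3}), and as such it goes through. Concretely: with $f(x)=0$, $f\equiv 1$ on $B(1,x)$, $f\equiv 2$ on $B(2,x)$ (values elsewhere are irrelevant by Remark \ref{obs2}), the first sum in Proposition \ref{proof2} vanishes identically, the second equals $d(x)^2$, the third equals the number $E_{11}$ of edges inside $B(1,x)$ (each unordered edge contributes $2(1-1)^2+\tfrac12(1+1)=1$), and the fourth equals $\tfrac12\bigl(4d(x)-d(x)^2-\sum_{v\in B(1,x)}d(v)\bigr)$; dividing by $\Gamma(f)(x)=d(x)/2>0$ gives exactly your displayed identity, which I have checked, including the sanity check on $K_{T+2}$. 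Your two estimates are also sound: $E_{11}=\tfrac12\sum_{v\in B(1,x)}|B(1,v)\cap B(1,x)|=\tfrac12\sum_{v\in B(1,x)}t(x,v)\le\tfrac12 d(x)T$, and a vertex of minimum degree exists because, under the paper's standing assumptions (locally finite, no isolated vertices), the degrees form a nonempty set of positive integers; at such a vertex $\bar d(x)\ge d(x)$, killing the degree term. Lemma \ref{proof3} then converts the single ratio bound into $\Ric(G)\le 2+T/2$, covering also the trivial case $T=\infty$. Your closing remark about why the minimum-degree base point is forced (the star versus the clique) is apt; this pairing of the truncated distance function with a minimal-degree vertex is, in essence, how the bound is obtained in the original source \cite{klart}, so you have in effect reconstructed the missing proof rather than found a divergent one.
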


\begin{corollary}\label{coro}
Let $G$ be a graph with no triangles, then $\Ric(G)\leq2$.
\end{corollary}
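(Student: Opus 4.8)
The plan is to derive the bound directly from Theorem \ref{teotriang}, treating Corollary \ref{coro} as the special case of a triangle-free graph. Since $G$ has no triangles, every pair of vertices $v,v'$ is contained in zero triangles, so $t(v,v')=0$ for all $\{v,v'\}\subset\mathcal{V}(G)$. Consequently the supremum defining $T$ satisfies $T=\sup_{\{v,v'\}} t(v,v')=0$. Substituting this into the inequality of Theorem \ref{teotriang} gives $\Ric(G)\leq 2+\frac{T}{2}=2+0=2$, which is exactly the claim.

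The only point that requires a moment's care is the edge case in which the supremum is taken over an empty or degenerate index set: if $G$ had no edges, the triangle-counting function would be vacuously zero, but the paper has already assumed $G$ has no isolated vertices, so $\mathcal{E}(G)$ is nonempty and $T=0$ is attained genuinely. Thus the hypotheses of Theorem \ref{teotriang} are met and the substitution is legitimate. I would state the argument in a single short paragraph, since there is no genuine obstacle here; the corollary is a one-line specialization.

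\begin{proof}
Since $G$ contains no triangles, for every pair of vertices $v,v'\in\mathcal{V}(G)$ we have $t(v,v')=0$, where $t$ is the triangle-counting function of Theorem \ref{teotriang}. Hence $T=\sup_{\{v,v'\}\subset\mathcal{V}(G)} t(v,v')=0$. Applying Theorem \ref{teotriang} we obtain
\[
\Ric(G)\leq 2+\frac{T}{2}=2,
\]
as claimed.
\end{proof}
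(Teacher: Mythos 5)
Your proof is correct and is exactly the argument the paper intends: the corollary is the immediate specialization of Theorem \ref{teotriang} to $T=0$, which is why the paper states it without further proof. Your extra remark about the supremum's index set is harmless but unnecessary, since triangle-freeness alone forces $t(v,v')=0$ for every pair.
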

.

Knowing the Ricci curvature of a graph allows one to obtain other information about it. We now recall some results about the spectral gap that depend on a lower bound on the Ricci curvature. Our references are \cite[Section 3]{klart}, \cite[Section 4]{klart}. To learn more about isoperimetric inequalities for graphs we address the reader to \cite[Section 4]{isop}. The graphs considered are finite.

 We start by recalling the definition of the spectral gap

\begin{defin}
We define the adjacency matrix of $G$ as a square matrix of dimension $|\mathcal{V}(G)|\times|\mathcal{V}(G)|$ such that the entry $A_{ij}$ of the matrix is $1$ if there is an edge between the vertex $i$ and the vertex $j$ of $G$, $A_{ij}$ is $0$ otherwise.

Then the Laplacian matrix is defined as:
\[
\overline{\Delta}:=-D+A
\]
where $D$ is the diagonal matrix whose entries are the degrees of the vertices and $A$ is the adjacency matrix of the graph.
\end{defin}

\begin{defin}\label{spec}
Let $G$ be a graph and $\overline{\Delta}$ its Laplacian matrix, we define the spectral gap of $G$, $\lambda_G$ as the least non zero eigenvalue of $-\overline{\Delta}$.
\end{defin}
 Where no ambiguity occurs we shall just denote the spectral gap as $\lambda$.
The next results, proved in \cite{klart}, show how the Ricci curvature of a graph can help bound its spectral gap and how these two values imply an isoperimetric inequality.

\begin{teo}\label{spgap}
Let $G$ be a graph with $\Ric(G)>0$, then $\lambda\geq \Ric(G)$.
\end{teo}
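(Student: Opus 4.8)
The plan is to run the standard Bakry--Émery argument, summing the defining curvature inequality over all vertices and evaluating it on an eigenfunction that realizes the spectral gap. The first observation is that the operator $\Delta$ introduced above is exactly the Laplacian matrix $\overline{\Delta}$: for a function $f$ on $\mathcal{V}(G)$ one has $\Delta(f)(x)=\sum_{v\in B(1,x)}f(v)-d(x)f(x)=(\overline{\Delta}f)(x)$. Hence an eigenfunction of $-\overline{\Delta}$ with eigenvalue $\lambda$ is a function $f$ with $\Delta(f)=-\lambda f$, and I would fix such an $f$ with $\lambda=\lambda_G$ the spectral gap; in particular $f\neq 0$ and $\lambda>0$.

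Next I would record two summation-by-parts identities valid on a finite graph. Since every edge $\{x,v\}$ contributes $(g(v)-g(x))+(g(x)-g(v))=0$, one gets $\sum_{x\in\mathcal{V}(G)}\Delta(g)(x)=0$ for every $g$. A short computation, reindexing the double sum over oriented edges, then yields the discrete Green identity $\sum_{x}\Gamma(f,g)(x)=-\sum_{x}f(x)\,\Delta(g)(x)$. These two identities are the only analytic input needed.

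The core step is to sum the inequality $\Gamma_2(f)(x)\ge K\,\Gamma(f)(x)$, with $K=\Ric(G)$, over all $x\in\mathcal{V}(G)$. For the right-hand side, the Green identity with $g=f$ together with $\Delta(f)=-\lambda f$ gives $\sum_x\Gamma(f)(x)=-\sum_x f(x)\Delta(f)(x)=\lambda\sum_x f(x)^2$. For the left-hand side I use $\Gamma_2(f)=\tfrac12\Delta\Gamma(f,f)-\Gamma(f,\Delta f)$: the term $\sum_x\Delta\Gamma(f,f)(x)$ vanishes by the first identity, while the Green identity applied to $\sum_x\Gamma(f,\Delta f)(x)=-\sum_x f(x)\Delta(\Delta f)(x)$ together with $\Delta f=-\lambda f$ gives $\sum_x\Gamma_2(f)(x)=\lambda^2\sum_x f(x)^2$. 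Substituting, the summed inequality becomes $\lambda^2\sum_x f(x)^2\ge K\lambda\sum_x f(x)^2$.

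Finally, since $f$ is a nonzero eigenfunction we have $\sum_x f(x)^2>0$, and $\lambda>0$ by the definition of the spectral gap, so dividing through by $\lambda\sum_x f(x)^2$ leaves $\lambda\ge K=\Ric(G)$, as claimed. The only place requiring care is the bookkeeping in the summation-by-parts identities: one must check that the reindexing over oriented edges is legitimate and that finiteness of $G$ makes every sum well defined. No case analysis and no curvature computation are involved, so this step is routine once the identities are stated correctly.
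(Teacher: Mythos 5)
Your proof is correct. Note that the paper itself does not prove Theorem \ref{spgap}: it is recalled without proof from Klartag--Kozma--Ralli--Tetali \cite{klart}, and the proof given there is exactly the summed Bakry--\'Emery (Lichnerowicz-type) argument you propose --- apply $\Gamma_2(f)\ge \Ric(G)\,\Gamma(f)$ to an eigenfunction $f$ of the Laplacian, sum over all vertices, and simplify using $\sum_x\Delta(g)(x)=0$ and the discrete Green identity $\sum_x\Gamma(f,g)(x)=-\sum_x f(x)\Delta(g)(x)$. The one point worth pinning down is your assertion that $\lambda>0$ ``by the definition of the spectral gap'': the definition only guarantees $\lambda\neq 0$, but positivity follows because $-\overline{\Delta}=D-A$ is positive semidefinite, which is immediate from your own identities, since $\lambda\sum_x f(x)^2=\sum_x\Gamma(f)(x)\ge 0$.
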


\begin{teo}\label{sgiso}
Suppose $G$ has $\Ric(G)\geq K$, for some $K\in \mathbb{R}\setminus \{0\}$. Then, for any subset $A\subset \mathcal{V}(G)$,
\[
|\partial A| \geq \frac{1}{2}min \{\sqrt{\lambda},\frac{\lambda}{\sqrt{2|K|}}\}|A|\left(1-\frac{|A|}{|\mathcal{V}(G)|}\right) .
\]
Here, by $\partial A$, we mean the collection of all edges connecting $A$ to its complement.
\end{teo}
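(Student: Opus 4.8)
The plan is to adapt Ledoux's semigroup proof of Buser's inequality to the discrete setting, running the spectral gap and the curvature lower bound simultaneously through the continuous-time heat flow $P_t=e^{t\overline{\Delta}}$ generated by the Laplacian $\overline{\Delta}$. Recall that $P_t$ is self-adjoint with respect to the uniform measure $\mu$ on $\mathcal{V}(G)$, fixes the constants, and preserves means. Two contraction properties drive the argument. First, the spectral gap yields the $L^2$ relaxation $\norm{P_t g}_2\le e^{-\lambda t}\norm{g}_2$ for every mean-zero $g$, which controls how fast $P_t\mathbf{1}_A$ approaches its mean $a:=|A|/|\mathcal{V}(G)|$. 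Second, differentiating the energy $\mathcal{E}(P_t f)=\int\Gamma(P_t f)\,d\mu$ along the flow and invoking the defining pointwise inequality $\Gamma_2\ge K\Gamma$ (valid since $\Ric(G)\ge K$) gives both the forward bound $\mathcal{E}(P_t f)\le e^{-2Kt}\mathcal{E}(f)$ and the local reverse Poincar\'e estimate $\Gamma(P_s h)\le\frac{K}{e^{2Ks}-1}\,\norm{h}_\infty^2$; the factor $\frac{K}{e^{2Ks}-1}$ is positive for $K$ of either sign (and equals $\tfrac{1}{2s}$ in the limit $K\to0$), which is precisely what will produce the uniform $|K|$-dependence in the statement.

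The heart of the proof is an $L^1$ smoothing estimate for the indicator. By $L^1$--$L^\infty$ duality I would write $\norm{\mathbf{1}_A-P_t\mathbf{1}_A}_1=\sup_{\norm{h}_\infty\le1}\langle\mathbf{1}_A,\,h-P_th\rangle$; then, using $h-P_th=-\int_0^t\overline{\Delta}P_sh\,ds$ and integrating by parts, each summand produces the carr\'e du champ $\Gamma(\mathbf{1}_A,P_sh)$. Applying the Cauchy--Schwarz inequality for $\Gamma$ and bounding $\sqrt{\Gamma(P_sh)}$ in $L^\infty$ by the reverse Poincar\'e estimate above yields
\[
\norm{\mathbf{1}_A-P_t\mathbf{1}_A}_1\le C\Bigl(\int_0^t\sqrt{\tfrac{K}{e^{2Ks}-1}}\,ds\Bigr)\,\mathrm{Per}(A),
\]
where $\mathrm{Per}(A)=\norm{\sqrt{\Gamma\mathbf{1}_A}}_1$ is the perimeter functional, comparable to $|\partial A|/|\mathcal{V}(G)|$. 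The time factor behaves like $\sqrt{2t}$ as $t\to0$ and is of order $1/\sqrt{|K|}$ as $t\to\infty$, the two regimes being the short-time heat-smoothing regime and the regime in which the curvature is felt.

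Combining this with the triangle inequality
\[
2a(1-a)=\norm{\mathbf{1}_A-a}_1\le\norm{\mathbf{1}_A-P_t\mathbf{1}_A}_1+\norm{P_t\mathbf{1}_A-a}_1\le C\,\Psi_K(t)\,\mathrm{Per}(A)+e^{-\lambda t}\sqrt{a(1-a)}
\]
gives, for every $t>0$, an inequality to be optimized in the diffusion time. The short-time balance reproduces the purely spectral Cheeger bound and contributes the term $\sqrt{\lambda}$, while the long-time balance is where the curvature enters and contributes $\lambda/\sqrt{2|K|}$; the resulting envelope is governed by the smaller of the two scales, which is exactly the minimum in the statement. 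Finally the elementary bound $\min(|A|,|\mathcal{V}(G)\setminus A|)\ge|A|(1-|A|/|\mathcal{V}(G)|)$ converts the Cheeger-type estimate on $\mathrm{Per}(A)$ into the claimed lower bound on $|\partial A|$. I expect the main obstacle to be the $L^1$ smoothing estimate itself: transporting the manifold computation to the graph $\Gamma$-calculus requires care in the integration by parts and in comparing $\mathrm{Per}(A)=\norm{\sqrt{\Gamma\mathbf{1}_A}}_1$ with the edge boundary $|\partial A|$, and pinning down the \emph{sharp} constants $\tfrac12$ and $\sqrt{2|K|}$ (rather than merely their order of magnitude) in the final optimization is the delicate point.
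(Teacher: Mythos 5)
This statement is not actually proved in the paper: it is recalled verbatim from \cite{klart} (see the sentence introducing Theorems \ref{spgap} and \ref{sgiso}, ``The next results, proved in \cite{klart}\dots''), and the proof given there is precisely the Ledoux-type semigroup argument you outline --- the reverse local Poincar\'e bound $\Gamma(P_s h)\le \frac{K}{e^{2Ks}-1}\norm{h}_\infty^2$ derived from the pointwise condition $\Gamma_2\ge K\Gamma$, the $L^1$ heat-smoothing estimate for $\mathbf{1}_A$ via duality and integration by parts, the $L^2$ spectral decay of $P_t\mathbf{1}_A$ toward its mean, and an optimization in $t$ producing the two regimes of the minimum. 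Your proposal is therefore correct in approach and essentially coincides with the proof in the cited source; what remains is only the constant bookkeeping in the final optimization, which you correctly identify as the delicate point.
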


Notice that for Theorem \ref{spgap} it is crucial to have $\Ric(G)$ positive, but positivity is not required for Theorem \ref{sgiso}. Given any bound on the spectral gap it is still possible to apply Theorem \ref{sgiso} to families of graphs with a negative lower bound for the Ricci curvature.

\section{Ricci curvature of Bruhat graphs}\label{secbru}

In this Section  
we work with graphs associated to Coxeter groups, namely we compute the discrete Ricci curvature of Bruhat graphs. 
We obtain as a Corollary a lower bound on the spectral gap of these graphs and an isoperimetric inequality. We recall that according to our notation fixed in Subsection \ref{ssec1} the Bruhat graph associated to a Coxeter group $W$ is denoted $B(W)$. The Discrete Ricci curvature is defined only for locally finite graphs, therefore during this whole chapter all the Coxeter groups are meant to be finite.

The theorem we prove is the following:

\begin{teo}\label{B(W)}
Given a finite Coxeter system, the discrete curvature of its Bruhat graph is $2$.
\end{teo}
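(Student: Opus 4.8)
The plan is to compute the local curvature at the identity $e$ and promote it globally by symmetry, then match an upper and a lower bound. Left multiplication by $g\in W$ sends an edge $\{w,v\}$ (which is an edge iff $vw^{-1}\in T$) to $\{gw,gv\}$, and $(gv)(gw)^{-1}=g(vw^{-1})g^{-1}\in T$ because $T$ is closed under conjugation; hence $B(W)$ is vertex-transitive, in particular regular with every degree equal to $d(e)=|B(1,e)|=|T|$, and by Remark \ref{obs2} the local curvature is constant, so it suffices to treat $e$. For the upper bound I observe that $B(W)$ is triangle-free: a triangle through $e$ would consist of reflections $t,t'$ with $t't\in T$, but a product of two distinct reflections acts as a nontrivial rotation on the plane spanned by the two roots and fixes the orthogonal complement, hence has determinant $+1$ and is never a reflection. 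By vertex-transitivity there are no triangles at all, so Corollary \ref{coro} gives $\Ric(B(W))\le 2$.

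For the lower bound I use the triangle-free formula of Remark \ref{triang} with $f(e)=0$. Regularity makes every coefficient $\tfrac{4-d(e)-d(v)}2$ equal to $2-|T|$, so by Lemma \ref{proof3} the inequality $\Gamma_2(f)(e)\ge 2\Gamma(f)(e)$ reduces to
\begin{equation}
\tfrac12\sum_{u\in B(2,e)}\ \sum_{v\in B(1,u)\cap B(1,e)}(f(u)-2f(v))^2+\Big(\sum_{t\in T}f(t)\Big)^2\ \ge\ |T|\sum_{t\in T}f(t)^2. \tag{$\star$}
\end{equation}
To analyse the left side I describe $B(2,e)$ via Dyer's theorem. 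The neighbours of $e$ are exactly the reflections $T$, so each $u\in B(2,e)$ is a product of two reflections, i.e.\ a nontrivial rotation (hence at distance exactly $2$). By Corollary \ref{corody} all factorizations of such a $u$ into two reflections lie in a single maximal dihedral reflection subgroup $W_u$; I check this subgroup is independent of the factorization, which gives the equivalence relation announced in the introduction, with classes indexed by maximal dihedral subgroups. The same argument shows the common neighbours $B(1,u)\cap B(1,e)$, abbreviated $\mathrm{cn}(u)$, are precisely the reflections of $W_u$, so $|\mathrm{cn}(u)|=m_{W_u}$, where $m_{W'}$ denotes the number of reflections of a dihedral subgroup $W'$.

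The heart of the argument is a linear-algebra computation. Writing $a_t=f(t)$ and treating the values $f(u)$ as free parameters, I minimize the left side of $(\star)$ over each $f(u)$ separately; the minimum of $\tfrac12\sum_{v\in \mathrm{cn}(u)}(f(u)-2a_v)^2$ is $2\sum_v a_v^2-\tfrac2{m_{W_u}}(\sum_v a_v)^2$. Using that each reflection $t$ is a common neighbour of $e$ and exactly $|T|-1$ vertices $u$, and that $\sum_{v\in \mathrm{cn}(u)}a_v=\Sigma_{W_u}:=\sum_{s\in W_u}a_s$ depends only on the class of $u$, inequality $(\star)$ becomes
\[
Q(a):=(|T|-2)\sum_{t\in T}a_t^2+\Big(\sum_{t\in T}a_t\Big)^2-\sum_{W'}\tfrac{2(m_{W'}-1)}{m_{W'}}\,\Sigma_{W'}^2\ \ge\ 0,
\]
the sum over maximal dihedral subgroups. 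The crucial input from Dyer is that \emph{each pair of distinct reflections lies in a unique maximal dihedral subgroup}, whence the subgroups through a fixed $t$ partition $T\setminus\{t\}$ and $\sum_{W'\ni t}(m_{W'}-1)=|T|-1$. Matching diagonal and off-diagonal coefficients, this identity lets me rewrite $Q$ exactly as $\sum_{W'}(m_{W'}-2)\,q_{W'}$ with $q_{W'}=\sum_{s\in W'}a_s^2-\tfrac1{m_{W'}}\Sigma_{W'}^2$, the variance-type form of the dihedral subgroup. Since $m_{W'}\ge 2$ and $q_{W'}\ge0$ by Cauchy--Schwarz, $Q\ge0$; this gives $\Ric(B(W))_e\ge 2$ and, with the upper bound, $\Ric(B(W))=2$.

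I expect the main obstacle to be the exact decomposition $Q=\sum_{W'}(m_{W'}-2)q_{W'}$. It relies on the uniqueness in Corollary \ref{corody} so that the off-diagonal cross-terms split cleanly across subgroups, together with the counting identity $\sum_{W'\ni t}(m_{W'}-1)=|T|-1$ to force the diagonal terms to cancel. The naive bound that applies Cauchy--Schwarz to each $\Sigma_{W'}^2$ independently is too lossy, so the delicate point is the precise bookkeeping of how reflections are shared among the dihedral subgroups, which is exactly where Dyer's structure theorem does the work.
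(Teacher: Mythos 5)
Your proof is correct, and it rests on the same pillars as the paper's: reduction to the vertex $e$ by vertex-transitivity (Lemma \ref{stesso}), the upper bound $\Ric(B(W))\le 2$ from triangle-freeness via Corollary \ref{coro}, the triangle-free expression for $\Gamma_2$ (Remark \ref{triang}), Dyer's structure theory (all factorizations of a given $u\in B(2,e)$ lie in one maximal dihedral reflection subgroup, and each pair of distinct reflections lies in exactly one such subgroup), and a completion of the square in $f(u)$ --- your ``minimize over $f(u)$'' step is literally the paper's square completion. Where you genuinely differ is the endgame. The paper first settles the dihedral case, then treats general $W$ class by class: for the class attached to a dihedral subgroup with $m$ reflections it uses the lossy bound $\frac{2(m-1)}{m}\ge 1$ to get $\sum_{\{v,v'\}\subset T\cap G_U}(f(v)-f(v'))^2$ per class, sums over classes (pair-uniqueness makes the union exactly all pairs of reflections), and finishes with the identity $\sum_{\{v,v'\}\subset T}(f(v)-f(v'))^2+\bigl(\sum_{v\in T}f(v)\bigr)^2=|T|\sum_{v\in T}f(v)^2$. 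You avoid both the dihedral warm-up and the lossy step by proving the exact decomposition $Q=\sum_{W'}(m_{W'}-2)\,q_{W'}$, which does hold: the off-diagonal coefficient of $a_ta_{t'}$ in $Q$ is $2-\frac{4(m'-1)}{m'}=-\frac{2(m'-2)}{m'}$, matching the right side because of uniqueness of the subgroup through $\{t,t'\}$, and the diagonal coefficients agree precisely because $\sum_{W'\ni t}(m_{W'}-1)=|T|-1$; then $Q\ge 0$ since $q_{W'}\ge 0$ and $m_{W'}\ge 2$. Your route buys an exact formula for the curvature deficit at $e$ (after optimizing the values on $B(2,e)$) as a nonnegative combination of per-subgroup variances, hence a clear picture of when the bound is nearly attained; the paper's route buys shorter algebra. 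You also supply a detail the paper leaves implicit: a proof that $B(W)$ is triangle-free (a product of two reflections has determinant $+1$, so is never a reflection), which both Remark \ref{triang} and Corollary \ref{coro} require.
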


For type $A_n$ Coxeter groups this result is proved in  \cite[Section 2.6]{klart}, notice that our result holds for any finite Coxeter group and not only for the irreducible ones.

First we notice that graph automorphisms respect the Ricci curvature: 
\begin{lema}\label{stesso}
If $G$ is any locally finite graph and $\chi$ is a graph automorphism then the following holds:
$$
Ric(G)_x=Ric(G)_{\chi(x)}.
$$
\end{lema}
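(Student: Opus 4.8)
The plan is to exploit the fact, recorded in Remark \ref{obs2}, that the local curvature $\Ric(G)_x$ is determined entirely by the length-$2$ path subgraph of $x$, together with the observation that a graph automorphism carries the local structure at $x$ onto the corresponding structure at $\chi(x)$. Concretely, I would first check that $\chi$ preserves the combinatorial distance $\delta$: since $\chi$ is a bijection on $\mathcal{V}(G)$ taking edges to edges and non-edges to non-edges, it sends any path to a path of the same length, whence $\delta(\chi(u),\chi(v))=\delta(u,v)$. In particular $\chi(B(i,x))=B(i,\chi(x))$ for every $i$, and $\chi$ restricts to an isomorphism between the length-$2$ path subgraph of $x$ and that of $\chi(x)$. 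By Remark \ref{obs2}, two vertices whose length-$2$ path subgraphs are isomorphic have equal local Ricci curvature, which gives the claim directly.

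If one prefers an argument that does not appeal to Remark \ref{obs2}, I would instead transport functions along $\chi$. Given $f$ on $\mathcal{V}(G)$ set $\chi_*f:=f\circ\chi^{-1}$; this is a linear bijection of the space of real functions on $\mathcal{V}(G)$ onto itself. The key step is to verify that $\chi$ intertwines the three operators, that is,
$$\Delta(\chi_*f)(\chi(y))=\Delta(f)(y),\quad \Gamma(\chi_*f,\chi_*g)(\chi(y))=\Gamma(f,g)(y),\quad \Gamma_2(\chi_*f)(\chi(y))=\Gamma_2(f)(y)$$
for every vertex $y$. The first two identities follow by re-indexing the defining sums over $B(1,\cdot)$ using $\chi(B(1,y))=B(1,\chi(y))$ together with the relation $\chi_*f(\chi(v))=f(v)$; the identity for $\Gamma_2$ is then automatic, since $\Gamma_2$ is assembled from $\Delta$ and $\Gamma$ by composition and pointwise linear combination, all of which are preserved once $\Delta$ and $\Gamma$ are.

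With the intertwining in hand, the conclusion is a change of variables in the infimum defining $\Ric(G)_{\chi(x)}$. As $g$ runs over all functions with $g(\chi(x))=0$ and $\Gamma(g)(\chi(x))\neq 0$, writing $g=\chi_*f$ makes $f$ run bijectively over all functions with $f(x)=0$ and $\Gamma(f)(x)\neq 0$, while the ratio $\Gamma_2(g)(\chi(x))/\Gamma(g)(\chi(x))$ equals $\Gamma_2(f)(x)/\Gamma(f)(x)$. Hence the two infima coincide and $\Ric(G)_x=\Ric(G)_{\chi(x)}$. I do not expect a genuine obstacle here: the only point requiring care is the bookkeeping in the $\Gamma_2$ intertwining, and even that is circumvented by the Remark \ref{obs2} route, so the real content is simply recognizing that an automorphism acts as a local isomorphism at each vertex.
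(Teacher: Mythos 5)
Your proposal is correct and its primary route is exactly the paper's argument: the paper's entire proof is ``Trivial from Remark \ref{obs2},'' and your first paragraph simply spells out the details (distance preservation, $\chi(B(i,x))=B(i,\chi(x))$, isomorphism of length-$2$ path subgraphs) that the paper leaves implicit. The alternative operator-intertwining argument you sketch is also valid, but it is extra machinery the paper does not need.
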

\begin{proof}
Trivial from Remark \ref{obs2}.
\end{proof}

 We start studying the neighbours of vertices in $B(W)$ for a given Coxeter group $W$. Note that for $g\in W$, $B(1,g)=\{sg| s$ reflection in $ W\}$ and $B(2,g)=\{ss'g|s\neq s'$ reflections in $W\}$.
 Our first step toward the proof of Theorem \ref{B(W)} is the definition of an equivalence relation on the elements of $B(2,e)$, $e$ being the identity of $W$. We therefore take the two sets $B(2,e)$ and $B(1,e)$. For any $u\in B(2,e)$ we define the subgroup $G_u\subset W$ as follows:

\[
G_u:=<s\in T|\exists t\in T \text{ s.t. } st=u \text{ or } ts=u >,
\]

i.e., $G_u\subset W$ is the subgroup generated by all the reflections $s_i,s_j$ such that $s_is_j=u$.
We now define an equivalence relation on $B(2,e)$: given $u,u'\in B(2,e)$, we say that $u\sim u'$ if and only if $G_u=G_{u'}$.

\begin{example}
As an example of the group $G_u$ we consider $W=B_4$ and we see it as a group generated by reflections in $\mathbb{R}^4$. The reflections in $W$ are of the following types:
\begin{itemize}
\item $(i,-i)$ for $i=1,2,3,4$ that acts on $\mathbb{R}^4$ by changing the sign of the $i$-th coordinate;
\item $(i,j)(-i,-j)$ for $i,j=1,2,3,4$, $i\neq j$ that acts on $\mathbb{R}^{4}$ by switching the $i$-th and the $j$-th coordinates;
\item $(i,-j)(-i,j)$ for $i,j=1,2,3,4$, $i\neq j$ that acts on $\mathbb{R}^{4}$ by inverting the sign of the coordinates $i$ and $j$ and then switching them.
\end{itemize}
We consider $u=[-1,-2,3,4]\in B(2,e)$. We want to compute $G_u$ and find  the $u'\in B(2,e)$ such that $u\sim u'$.

First we find all the pairs of reflections $t,t'$ such that $tt'=u$.
Notice that
$u^2=e$, so $t$ and $t'$ must commute,
then notice that both $t$ and $t'$ must act on at least one among the first two coordinates. So the only possible pairs $(t,t')$ are $((1,-1),(2,-2))$, $((2,-2),(1,-1))$, $((1,2)(-1,-2),(1,-2)(-1,2))$ and $((1,-2)(-1,2),(1,2)(-1,-2))$. It follows that 
\[
G_u=<(1,-1),(2,-2),(1,-2)(-1,2),(1,2)(-1,-2)>.
\]
In this case $G_u$ is generated by $s_1=(1,-1)$ and $s_2=(1,-2)(-1,2)$.
The relations on $s_1$ and $s_2$ are $s_1^2=s_2^2=e$ and $(s_1s_2)^2=e$, so $G_u$ is isomorphic to the dihedral group $I_2(2)$  
and $u=(1,-1)(2,-2)$ is a rotation in it. Any other $u'$ in the equivalence class of $u$ is of type $\rho$, where $\rho$ is a non trivial rotation in $G_u$. We conclude that the equivalence class of $u$ in $B(2,e)$ is
\[
\{(1,-1)(2,-2), (1,-1)(1,2), (2,-2)(1,-2)\}.
\]
Notice that for two elements $u,u'\in B(2,e)$ to be in the same equivalence class according to $\sim$ we must have $G_u=G_{u'}$, it is not sufficient to have $G_u\sim G_{u'}$. In particular, taking $u'=[1,2,-3,-4]$ it is easy to see that $G_u\sim G_u'$, but because $G_u\neq G_{u'}$ we have $u\not\sim u'$.
\end{example} 

Now we focus on the structure of the subgroups $G_u$:
\begin{prop}\label{prop1}
For any $u\in B(2,e)$, $G_u$ is a dihedral reflection subgroup. Furthermore, for any pair of reflections $t_1,t_2$ such that $t_1t_2=u$, $G_u$ is the maximal dihedral reflection subgroup that contains $\{t_1,t_2\}$.
\end{prop}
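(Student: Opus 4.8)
The plan is to fix an arbitrary factorization $u=t_1t_2$ with $t_1=r_{\alpha_1}$ and $t_2=r_{\alpha_2}$ reflections (such a factorization exists since $u\in B(2,e)$), to form the dihedral subgroup $W'$ attached to the plane $\mathbb{R}\alpha_1\oplus\mathbb{R}\alpha_2$ as in Corollary \ref{corody}, and then to prove the equality $G_u=W'$. Since $u\neq e$ forces $t_1\neq t_2$, the roots $\alpha_1,\alpha_2$ are linearly independent and the plane is genuinely two-dimensional; by Corollary \ref{corody} the group $W'$ is dihedral and is the largest dihedral subgroup of $W$ containing $t_1$ and $t_2$. Once $G_u=W'$ is established, both assertions follow at once: $G_u$ is dihedral, and it is the maximal dihedral reflection subgroup containing $\{t_1,t_2\}$. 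That the answer does not depend on the chosen factorization is automatic, because $G_u$ is defined without reference to any choice.

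For the inclusion $G_u\subseteq W'$ I would take any reflection $s$ among the generators of $G_u$, so that $su'=u$ or $u's=u$ for some reflection $u'$. In either case $u=t_1t_2$ is realized as a product of the two reflections $s$ and $u'$, and since $u\neq e$, Lemma \ref{dyer} applies and shows that $\langle t_1,t_2,s,u'\rangle$ is a dihedral reflection subgroup. A dihedral reflection group confines all of its roots to a single $2$-plane; as this plane contains the independent roots $\alpha_1,\alpha_2$, it must coincide with $\mathbb{R}\alpha_1\oplus\mathbb{R}\alpha_2$. Hence the positive root of $s$ lies in this plane, so $s\in W'$ by the very definition of $W'$. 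As $s$ was an arbitrary generator of $G_u$, this gives $G_u\subseteq W'$.

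For the reverse inclusion $W'\subseteq G_u$ I would use the explicit structure of $W'$. Realizing $W'$ as a dihedral group of order $2m$, its nontrivial elements split into $m-1$ rotations and $m$ reflections, and $u=t_1t_2$ is one of the rotations. The elementary fact I need is that in a dihedral group a fixed nontrivial rotation can be written as a product of two distinct reflections in exactly $m$ ways, and as the factorization varies the first factor ranges over \emph{all} $m$ reflections of $W'$. Consequently every reflection $s$ of $W'$ occurs in some factorization $u=su'$, so every reflection of $W'$ is one of the generators of $G_u$; this yields $W'\subseteq G_u$ and, together with the previous paragraph, the equality $G_u=W'$.

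I expect the main obstacle to be the inclusion $G_u\subseteq W'$, and more precisely the step identifying the root plane of the dihedral group produced by Lemma \ref{dyer} with $\mathbb{R}\alpha_1\oplus\mathbb{R}\alpha_2$: one must rule out the possibility that some factorization $u=su'$ introduces a reflection whose root escapes this plane. This is exactly what Dyer's result guarantees, once one knows that a dihedral reflection group keeps its roots inside a two-dimensional subspace, and it is the geometric input that makes the argument case-free.
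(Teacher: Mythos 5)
Your proposal is correct and follows essentially the same route as the paper's own proof: both fix a factorization $u=t_1t_2$, identify $G_u$ with the subgroup $W'$ of Corollary \ref{corody}, prove $G_u\subseteq W'$ via Lemma \ref{dyer} together with the observation that the resulting dihedral group's roots span the plane $\mathbb{R}\alpha_1\oplus\mathbb{R}\alpha_2$, and prove $W'\subseteq G_u$ using the fact that every reflection of a dihedral group appears as a factor in some two-reflection factorization of any fixed nontrivial rotation. Your treatment is in fact slightly more careful on two minor points (handling both cases $su'=u$ and $u's=u$ in the definition of $G_u$, and noting explicitly that $u\neq e$ forces the independence of $\alpha_1,\alpha_2$), but the substance is identical.
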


\begin{proof}
Let 
$\alpha_i$ be the positive root associated to $t_i$ through the bijection $\alpha\rightarrow r_{\alpha}$.
We consider the subgroup described in Corollary \ref{corody}: $W'_{12}=<r_{\alpha}|\alpha\in\mathbb{R}\alpha_1\oplus \mathbb{R}\alpha_2\cap\Phi^+>$, this is the maximal dihedral reflection subgroup that contains $t_1$ and $t_2$, we show that $G_u=W'_{12}$.
Let $t_3\in G_u$ then there exists $t_4\in T$ such that $t_3t_4=u$. So
we have that $t_1t_2=t_{3}t_{4}\neq 1$ so we can apply Lemma \ref{dyer} and say  that $<r_1,r_2,r_3,r_4>$ is a dihedral group. It follows that the roots $\alpha_1,\alpha_2,\alpha_3$ and $\alpha_4$ lie on the same plane $\pi\subset \mathbb{R}^4$. Because $\alpha_i$ for $i=1,\ldots,4$ are pairwise independent we can write $\pi=\mathbb{R}\alpha_1\oplus \mathbb{R}\alpha_2$.

 So $t_3\in W'_{12}$ and it follows that $G_u\subset W_{12}'$. Conversely notice that $u\in W_{12}'$ and because it is the product of two reflections $u$ must be a rotation. For any rotation $\rho$ and any reflection $t$ in a dihedral group isomorphic to $I_2(m)$ there is a reflection $t'$ such that $tt'=\rho$, indeed we only need to take $k\equiv i-j $ mod $m$.
Therefore for any reflection $s$ in $W_{12}'$, there exists a $s'$ such that $ss'=u$. We conclude that $s\in G_u$ so $W_{12}'\subset G_u$.
\end{proof}

\begin{corollary}\label{coropair}
Let $u,u'\in B(2,e)$, if $G_u$ and $G_{u'}$ have two or more reflections in common then $G_u=G_{u'}$.
\end{corollary}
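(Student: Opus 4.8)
The plan is to show that the two shared reflections force $G_u$ and $G_{u'}$ to have the same defining plane, after which the characterization from Proposition \ref{prop1} closes the argument immediately.

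First I would record precisely what Proposition \ref{prop1} and Corollary \ref{corody} hand us. Each of $G_u$ and $G_{u'}$ is a dihedral reflection subgroup of the shape $\langle r_\alpha \mid \alpha \in \pi \cap \Phi^+\rangle$ for a suitable $2$-dimensional plane $\pi$. Writing $\pi_u = \mathbb{R}\alpha_1 \oplus \mathbb{R}\alpha_2$ for the plane attached to any pair of reflections $t_1,t_2$ with $t_1 t_2 = u$, we have $G_u = \langle r_\alpha \mid \alpha \in \pi_u \cap \Phi^+\rangle$, and likewise $G_{u'} = \langle r_\alpha \mid \alpha \in \pi_{u'} \cap \Phi^+\rangle$. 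The key consequence I would isolate is that every reflection of $G_u$ is of the form $r_\alpha$ with $\alpha \in \pi_u$, and similarly for $G_{u'}$.

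Next I would bring in the hypothesis. Let $s_1, s_2$ be two distinct reflections lying in both $G_u$ and $G_{u'}$, and let $\beta_1, \beta_2$ be their associated positive roots under the bijection $\alpha \mapsto r_\alpha$. Since $s_1 \neq s_2$, the roots $\beta_1, \beta_2$ are linearly independent. Because $s_1, s_2 \in G_u$, the preceding paragraph gives $\beta_1, \beta_2 \in \pi_u$; as $\pi_u$ is two-dimensional and $\beta_1, \beta_2$ are independent, this forces $\pi_u = \mathbb{R}\beta_1 \oplus \mathbb{R}\beta_2$. Running the identical argument inside $G_{u'}$ yields $\pi_{u'} = \mathbb{R}\beta_1 \oplus \mathbb{R}\beta_2$ as well, whence $\pi_u = \pi_{u'}$.

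Finally, feeding $\pi_u = \pi_{u'}$ back into the description $G_u = \langle r_\alpha \mid \alpha \in \pi_u \cap \Phi^+\rangle$ gives $G_u = G_{u'}$. The one point demanding care — and the step I expect to be the main obstacle — is the claim that \emph{every} reflection of the dihedral subgroup $G_u$ has its root in the plane $\pi_u$, not merely the chosen generators. This is exactly the identification obtained from Corollary \ref{corody} and the proof of Proposition \ref{prop1}, where the reflections of $G_u$ are put in bijection with the positive roots lying in $\pi_u$, so I would invoke that identification explicitly rather than the weaker generation statement.
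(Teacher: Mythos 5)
Your proof is correct and takes essentially the same route as the paper: both arguments identify $G_u$ and $G_{u'}$ with the maximal dihedral reflection subgroup $\langle r_\gamma \mid \gamma\in(\mathbb{R}\beta_1\oplus\mathbb{R}\beta_2)\cap\Phi^+\rangle$ attached to the plane spanned by the roots of the two common reflections. The step you flag as delicate (every reflection of $G_u$ has its root in the plane $\pi_u$) is exactly what the paper compresses into the phrase ``a consequence of the proof of Proposition \ref{prop1} due to the arbitrariness of $t_1$ and $t_2$''; you merely spell it out.
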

\begin{proof}
Let $s_{\alpha},s_{\beta}\in G_u\cap G_{u'}$, where
$s_{\alpha},s_{\beta}$  are reflections associated to the positive roots $\alpha$ and $\beta$. We notice that 
$G_u=W_{\alpha\beta}=<r_{\gamma} |{\gamma}\in\mathbb{R}\alpha\oplus \mathbb{R}{\beta}\cap\Phi^+>=G_{u'}$, this is a consequence of the proof of Proposition \ref{prop1} due to the arbitrariness of $t_1$ and $t_2$.
\end{proof}

Using the partition of $B(2,e)$ defined through $\sim$ we are able to prove Theorem \ref{B(W)}. 

\begin{proof}[Proof of Theorem \ref{B(W)}]
First, we compute the Ricci curvature of the Bruhat graph of dihedral groups:

\textbf{Dihedral case.} Let $s$ and $s'$ be two reflections that generate $W=I_{2}(m)$. These
satisfy $s^{2}=e$, $s'^{2}=e$ and $(ss')^m=e$. The reflections in $W$ are $s_j=s(s's)^{j}$ for $j=0,\ldots,m-1$, notice that $s'=s_{m-1}$. The product of two reflections is a rotation, in $W$ there are $m$ rotations (including the trivial one), these are written as $\rho_i=(ss')^i$ for $i=1,\ldots,m$. We remark that given any reflection $s_j$ and any rotation $\rho_i$ there exists a reflection $s_k$ such that $s_js_k=\rho_{i}$, this was already noticed in the proof of Proposition \ref{prop1}.
The function on $W$  that acts by multiplication for an element $g\in W$ induces an automorphism of $B(W)$, by Lemma \ref{stesso} we see that $\ric(B(W))_e=\ric(B(W))_g$, so it is enough to compute $\ric(B(W))_e$.
 The elements in $B(1,e)$ are the $s_i$, $i=0,\ldots,n-1$, while the elements in $B(2,e)$ are the non trivial rotations in $W$.
Now we are ready for the computation. Let $u\in B(2,e)$ so $u=\rho_{\sigma}$ for some $\sigma\in \{1,\ldots,n\}$ and $f$ be a function defined on $W$ such that $f(e)=0$ and $\Gamma(f)(e)\neq 0$, we have that:

\begin{align*}
\sum_{v\in B(1,e)\cap B(1,u)}(f(u)-2f(v))^2&=
\sum_{i=0}^{n-1}(f(u)-2f(s_i))^2\\
&=nf(u)^2+\sum_{i=0}^{n-1}4f(s_i)^2-4f(u)\sum_{i=0}^{n-1}f(s_i)\\
&=(\sqrt{n}f(u)-\frac{2}{\sqrt{n}}\sum_{i=0}^{n-1}f(s_i))^2+\\
&\frac{4}{n}\sum_{0\leq
i<j\leq n-1}(f(s_i)-f(s_j))^2\\
  & \geq
\frac{4}{n}\sum_{0\leq i<j\leq n-1}
(f(s_i)-f(s_j))^2.
\end{align*}
Where we used the fact that $\sum_{0\leq i<j\leq n-1}(x_i^2+x_j^2)=(n-1)\sum_{i=0}^{n-1}x_i^2$.
Summing up for all the $u\in B(2,e)$ we have:
\begin{align*}
\frac{1}{2}\sum_{u\in B(2,e)}\sum_{v\in B(1,e)\cap B(1,u)}(f(u)-2f(v))^2 &\geq \frac{2(n-1)}{n}\sum_{0\leq i<j\leq n-1}
(f(s_i)-f(s_j))^2\\ &\geq \sum_{0\leq i<j\leq n-1}
(f(s_i)-f(s_j))^2.
\end{align*}
We use this to bound $\Gamma_2(f)(e)$:
\[
2\Gamma_2(f)(e)\geq\sum_{0\leq i<j\leq n-1}
(f(s_i)-f(s_j))^2+(\sum_{1\leq i\leq n}f(s_i))^2+\sum_{1\leq i\leq n}(2-d)f(s_i)^2
\]
where $d=d(e)=d(v)=n$.
Therefore we can conclude that $$2\Gamma_2(f)(e)\geq 4\sum_{0\leq i<j\leq n-1}(f(s_i)-f(s_j))^2=4\Gamma(f)(e).$$ From equation (\ref{equcurv}) we have that Ric$(B(W))\geq2$, thanks to Corollary $\ref{coro}$ the claim follows.

We now prove the result for any finite Coxeter group.

\textbf{General case.} By Lemma \ref{stesso} it is enough to compute $\ric(B(W))_e$.
Let $f:W\rightarrow \mathbb{R}$.
Let $U\subset B(2,e)$ be an equivalence class of the equivalence relation $\sim$ and let $G_U$ be the group associated to any element in $U$. We want to study
\[
\sum_{u\in U}\sum_{v\in B(1,e)\cap B(1,u)}(f(u)-2f(v))^2.
\]
First we notice that the elements in $B(1,u)\cap B(1,e)$ are the $v=t\in T$ such that $t't=u$ for some $t'\in T$, hence $B(1,u)\cap B(1,e)\subset G_U$ where $U$ is the class of $u$. With the same computation carried out to prove the dihedral case we conclude that:
\[
\sum_{u\in U}\sum_{v\in B(1,u)\cap B(1,e)}(f(u)-2f(v))^2\geq \sum_{v,v'\in B(1,e)\cap G_U} \frac{4(n_u-1)}{n_u}(f(v)-f(v'))^2.
\]
With $n_u=|B(1,u)\cap B(1,e)|$.

Summing up over all the equivalence classes $U$ we obtain
that
\begin{equation}\label{conto}
\sum_{u\in B(2,e)}\sum_{v\in B(1,e)\cap B(1,u)}\frac{1}{2}(f(u)-2f(v))^2\geq\sum_{v,v'\in B(1,e)}(f(v)-f(v'))^2.
\end{equation}
Here the last inequality follows from the remark that
any pair of reflections in $W$ is contained in (exactly) one group $G_U$ (Corollary \ref{coropair}).
From (\ref{conto}) we have that
\[
2\Gamma_2(f)(e)\geq\sum_{v\in B(1,e)}(f(v)-f(v'))^2+(\sum_{v\in B(1,e)})^2+\sum_{v\in B(1,e)}(2-d)f(v)^2.
\]
Therefore  $\Gamma_2(f)(e)\geq 2 \Gamma(f)(e)$ and $\Ric(G)\geq 2$. From Lemma \ref{coro} we conclude Ric$(B(W))=2$.
\end{proof}

\section{Applications}
In this Section we derive some applications of the result obtained in Section \ref{secbru}. More precisely we obtain a lower bound on the spectral gap of the Bruhat graphs of finite Coxeter groups and an isoperimetric inequality valid for any subset of finite Coxeter group.

\begin{corollary}\label{corospgap}
Let $W$ be a finite Coxeter group and $B(W)$ be its Bruhat graph. Then its spectral gap $\lambda \geq 2$.
\end{corollary}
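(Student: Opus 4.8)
The plan is to derive this immediately from the main theorem of the previous section together with the general spectral-gap bound recalled earlier. The essential content has already been established: Theorem \ref{B(W)} tells us that for any finite Coxeter group $W$, the discrete Ricci curvature of its Bruhat graph is exactly $\Ric(B(W))=2$. Since $B(W)$ is a finite graph (finiteness of $W$ was assumed throughout Section \ref{secbru}), all the hypotheses needed to invoke the spectral-gap machinery are in place.

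First I would observe that $\Ric(B(W))=2>0$, so the positivity hypothesis of Theorem \ref{spgap} is satisfied. This is the only nontrivial check, and it is handed to us directly by Theorem \ref{B(W)}; no further curvature computation is required here.

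Then I would simply apply Theorem \ref{spgap} to $G=B(W)$. That theorem asserts that whenever $\Ric(G)>0$ one has $\lambda\geq\Ric(G)$. Substituting $\Ric(B(W))=2$ yields $\lambda\geq 2$, which is precisely the claim.

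I do not anticipate any genuine obstacle: this corollary is a direct specialization of Theorem \ref{spgap} to the family of Bruhat graphs, with the curvature value supplied by Theorem \ref{B(W)}. The only point worth stating explicitly is the positivity of the curvature, since Theorem \ref{spgap} (unlike the isoperimetric inequality of Theorem \ref{sgiso}) genuinely requires $\Ric(G)>0$; here this holds because the curvature equals $2$.
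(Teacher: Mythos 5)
Your proposal is correct and matches the paper's own proof exactly: the paper's argument is precisely the combination of Theorem \ref{B(W)} (giving $\Ric(B(W))=2$) with Theorem \ref{spgap}. Your explicit check that the positivity hypothesis $\Ric(B(W))>0$ holds is a nice touch, but there is nothing substantively different from the paper's one-line derivation.
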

\begin{proof}
This follows from Theorems \ref{B(W)} and \ref{spgap}.
\end{proof}

\begin{corollary}
Let $B(W)$ be the Bruhat graph associated to a finite Coxeter group $W$, let $A$ be a subset of $W$, then
\[
|\partial A|\geq \frac{1}{2}|A|\left(1-\frac{|A|}{|W|}\right).
\]
Where $\partial A$ denotes the set of edges in $B(W)$ that connects $A$ to its complement.
\end{corollary}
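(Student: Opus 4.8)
The plan is to apply Theorem \ref{sgiso} directly, invoking the two facts already established earlier in this section: that $\Ric(B(W))=2$ (Theorem \ref{B(W)}) and that the spectral gap satisfies $\lambda\geq 2$ (Corollary \ref{corospgap}). The whole statement should fall out as an immediate consequence, the only genuine work being to check that the constant produced by Theorem \ref{sgiso} specializes to the clean factor $\tfrac12$ claimed here.

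Concretely, I would set $K=2$ in Theorem \ref{sgiso}. This is legitimate since $2\in\mathbb{R}\setminus\{0\}$ and $\Ric(B(W))=2\geq K$, so the hypothesis $\Ric(G)\geq K$ holds. The vertex set of $B(W)$ is $W$, so $|\mathcal{V}(B(W))|=|W|$, and the theorem yields, for any $A\subset W$,
\[
|\partial A|\geq \frac{1}{2}\min\left\{\sqrt{\lambda},\frac{\lambda}{\sqrt{2\cdot 2}}\right\}|A|\left(1-\frac{|A|}{|W|}\right)=\frac{1}{2}\min\left\{\sqrt{\lambda},\frac{\lambda}{2}\right\}|A|\left(1-\frac{|A|}{|W|}\right).
\]

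The final step is to verify that the minimum factor is bounded below by $1$. Using the bound $\lambda\geq 2$ from Corollary \ref{corospgap}, I would note that $\sqrt{\lambda}\geq\sqrt{2}>1$ and $\tfrac{\lambda}{2}\geq 1$, so that $\min\{\sqrt{\lambda},\lambda/2\}\geq 1$. Substituting this lower bound into the displayed inequality gives precisely the claimed isoperimetric inequality. I do not anticipate any real obstacle: the content of the corollary lies entirely in the two previously proved results, and the argument reduces to the elementary observation that the minimum appearing in Theorem \ref{sgiso} is at least $1$ once $\lambda\geq 2$ and $K=2$.
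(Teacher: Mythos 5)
Your proposal is correct and follows exactly the paper's own argument: apply Theorem \ref{sgiso} with $K=2$ (justified by Theorem \ref{B(W)}), use $\lambda\geq 2$ from Corollary \ref{corospgap}, and observe that $\min\{\sqrt{\lambda},\lambda/2\}\geq 1$. In fact your write-up is slightly more explicit than the paper's, which leaves the choice $K=2$ and the evaluation $\sqrt{2|K|}=2$ implicit.
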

\begin{proof}
We know from Corollary \ref{corospgap} that  $\lambda_{B(W)}=\lambda\geq 2$. To apply Theorem \ref{sgiso} we notice that $min \{\sqrt{\lambda},\frac{\lambda}{\sqrt{2|K|}}\}\geq 1$. The statement follows. 
\end{proof}

\section*{Acknowledgement}
The author would like to thank Francesco Brenti for suggesting this problem and for many useful discussions. Furthermore she acknowledges the MIUR Excellence Department Project awarded to the Department of Mathematics, University of Rome Tor Vergata, CUP E83C18000100006.

\printbibliography

\end{document}